\newcommand{\D}{\displaystyle}
\newcommand{\bx}{\bm{x}}
\newcommand{\be}{\bm{e}}
\newcommand{\bn}{\mathbf{n}}
\newcommand{\by}{\bm{y}}
\newcommand{\p}{\partial}
\newcommand{\bff}{{\bm{f}}}
\newcommand{\bfu}{{\bm{ u}}}
\newcommand{\bz}{\mathbf{z}}
\newcommand{\M}{{\Omega}}
\newcommand{\V}{{\mathcal V}}
\newcommand{\bV}{\mathbf{V}}
\newcommand{\mathd}{\mathrm{d}}
\newtheorem{remark}{\textbf{Remark}}[section]
\newtheorem{assumption}{\textbf{Assumption}}
\newcommand{\R}{\mathbb{R}}
\numberwithin{equation}{section}
\begin{document}

\title{A nonlocal Stokes system with volume constraints \thanks{The research of ZS was supported in part by grant NSFC 12071244. The research of QD was supported in part by NSF DMS-2012562 and  DMS-1937254.}}

\author{
Qiang Du
\thanks{Department of Applied Physics and Applied Mathematics, and Data Science Institute, Columbia University, New York, NY, 10027, USA,
 \textit{Email: qd2125@columbia.edu}
}
\and
Zuoqiang Shi
\thanks{Yau Mathematical Sciences Center, Tsinghua University, Beijing, 100084, China. 
Yanqi Lake Beijing Institute of Mathematical Sciences and Applications, Beijing, 101408, China. 	
	\textit{Email: zqshi@tsinghua.edu.cn.}}
}

\maketitle

\begin{abstract}
In this paper, we introduce a nonlocal model for linear steady Stokes system with physical no-slip boundary condition. We use the idea of volume constraint to 
enforce the no-slip boundary condition and prove that the nonlocal model is well-posed. We also show that and the solution of the nonlocal system converges to the solution of the original Stokes system as the nonlocality vanishes. 
\end{abstract}

\begin{keywords}
Nonlocal Stokes system · Nonlocal operators · Smoothed particle hydrodynamics · Incompressible flows · Well-posedness  · Local limit
\end{keywords}

\begin{AMS}
45P05\ , \  45A05\ ,  \ 35A23\ , \ 46E35
\end{AMS}

\section{Introduction}

Recently, nonlocal models and corresponding numerical methods have attracted much attention due to many successful applications. For example, in solid
mechanics, the theory of peridynamics \cite{Sill00} has been used as a possible alternative to conventional models of elasticity and fracture mechanics. 
Many numerical methods have also been developed to simulate nonlocal models like peridynamics based on 
rigorous mathematical analysis \cite{Du-SIAM,MD14,MD16,TD14,DLZ13,DJTZ13,ZD10}. 
Nonlocal methods are also successfully applied in image processing and data analysis 
\cite{Peyre09,LDMM,belkin2003led,Coifman05geometricdiffusions,LZ17,KLO17,RWP06,Gu04,LWYGL14,CLL15,MCL16,Lui11}. The idea of integral approximation is also applied to derive numerical scheme for solving PDEs on point cloud \cite{LSS,li2019point}. 
 
In this paper,
we study the nonlocal analog of the Stokes system in fluid
mechanics. Previously, nonlocal Stokes models have been proposed in \cite{DT17} and
\cite{hwi19} and analyzed subject to periodic boundary condition. 
In this paper, we consider the case of a nonlocal no-slip boundary  condition. 
More precisely, for
the conventional, local linear Stokes system on 
 a domain $\Omega\subset \mathbb{R}^n$, 
\begin{align}
\label{eq:stokes}
\left\{\begin{array}{rcl}
  \Delta \bfu(\bx)-\nabla p(\bx) &= & \bff(\bx),\quad \bx\in \Omega\\
\nabla \cdot \bfu(\bx) & = &0,\quad \bx\in \Omega,
\end{array}\right. ,
\end{align}
 the no-slip boundary  condition on the boundary  $\p\Omega$ is
\begin{align}
  \label{eq:bc}
\bfu=0,\quad\quad \text{at}\;\; \p\Omega.
\end{align}
For the pressure, we impose average zero condition
\begin{align}
  \label{eq:pressure-ave}
  \int_\Omega p(\bx)\mathd \bx=0.
\end{align}
The no-slip boundary condition is a Dirichlet type boundary condition and it is often used in many 
real world applications. However, the theoretical study with no-slip boundary condition is also much more difficult. 
The first question is how to enforce no-slip boundary condition in the nonlocal approach.
Recently, Du et.al. \cite{Du-SIAM} proposed volume constraint to deal with the boundary condition in the nonlocal diffusion problem by enforcing the condition over
a nonlocal region adjacent to the boundary. 
Adopting this idea, in the nonlocal Stokes system, we extend the no-slip condition to a small layer as shown in Fig. \ref{fig:domain}.
\begin{figure}
  \centering
  \includegraphics[width=0.6\textwidth]{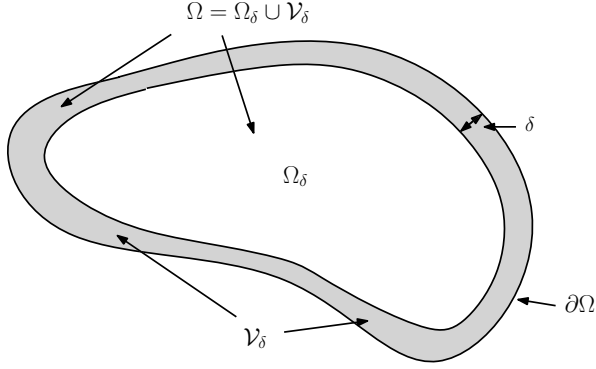}
  \caption{Computational domain in non-local Stokes model.} 
  \label{fig:domain}
\end{figure}
For a nonlocal problem involving nonlocal interactions on the range of $\delta>0$, the whole computational domain $\Omega$ is decomposed to two parts.
$\Omega=\mathcal{V}_\delta\bigcup\Omega_\delta$ as shown in Fig. \ref{fig:domain} and $\bfu$ is enforced to be 
zero in $\V_\delta$, i.e.
\begin{equation}
\label{eq:vc}
  \bfu_\delta(\bx)=0,\quad\bx\in \mathcal{V}_\delta.
\end{equation}
Definition of $\Omega_\delta$ and $\mathcal{V}_\delta$ will be given in \eqref{eq:domain}. 
The parameter $\delta$ is often called the nonlocal horizon parameter \cite{Sill00,Du19}.
In $\Omega_\delta$, the Stokes equation is approximated is formualted as
\begin{align}
  \label{eq:stokes-nonlocal-intro}
\left\{\begin{array}{rclc}
\D \mathcal{L}_\delta \bfu_\delta(\bx) - \D \mathcal{G}_\delta p_\delta(\bx)&=&{\D \int_{\Omega} \bar{R}_\delta(\bx,\by) \bff(\by)\mathd \by},& \bx\in \Omega_\delta, \\
\D \mathcal{D}_\delta \bfu_\delta(\bx) -  \bar{\mathcal{L}}_\delta p_\delta(\bx)& = &0, & \bx\in \Omega.\\
\end{array}\right.
\end{align}
The nonlocal integral operators used in \eqref{eq:stokes-nonlocal-intro} represent the nonlocal diffusion (Laplacian) $\mathcal{L}_\delta$, nonlocal gradient $\mathcal{G}_\delta$ and nonlocal divergence $\mathcal{D}_\delta$ respectively as in \cite{DT17} and the references cited therein. 
An additional operator $\bar{\mathcal{L}}_\delta$ is also used, which is a rescaled nonlocal diffusion operator.
The particular forms of the operators adopted here are given by
\begin{align}
  \label{eq:ops-L}
  \mathcal{L}_\delta \bfu(\bx)&=\frac{1}{\delta^2}\int_\Omega R_\delta(\bx,\by)({ \bfu(\by)-\bfu(\bx)})\mathd \by,\\
\mathcal{G}_\delta p(\bx)&= \frac{1}{2\delta^2}\int_\Omega R_\delta(\bx,\by)({\by-\bx}) p(\by)\mathd \by,\\
\mathcal{D}_\delta \bfu(\bx)&=\frac{1}{2\delta^2}\int_\Omega R_\delta(\bx,\by)({ \by-\bx})\cdot \bfu(\by)\mathd \by,\\
 \bar{\mathcal{L}}_\delta p(\bx)&= \int_\Omega \bar{R}_\delta(\bx,\by)({ p(\by)-p(\bx)})\mathd \by,
\label{eq:ops-Lbar}
\end{align}
for some nonnegative and smooth kernels $R_\delta(\bx,\by)$ and $\bar{R}_\delta(\bx,\by)$ specified later.



Finally, we also need average zero condition for the pressure
\begin{equation}
\label{eq:pressure-cond}
\int_{\Omega}p_\delta(\bx)\mathd\bx=0.
\end{equation}
\eqref{eq:vc}, \eqref{eq:stokes-nonlocal-intro} and \eqref{eq:pressure-cond} form a complete nonlocal formulation of the Stokes system.

As pointed out in the literature on nonlocal modeling (e.g. \cite{DT17,Du19}), nonlocal integral approximations are closely related to many numerical schemes of computational fluid dynamics, such as the smoothed particle
hydrodynamics (SPH) \cite{GM77,LL10,Lucy77,Mon05}, vortex methods \cite{BM85,CK00} and others \cite{Chertock17,CP00,ELC02,Kou05,TE04}. 
Analysis to the linear steady Stokes equation in this paper could give some new understanding to the theoretical foundation of these methods.

The Stokes system \eqref{eq:stokes} is well-known to be a saddle point problem. This remains the case for the nonlocal Stokes system given in \cite{DT17} subject to periodic boundary conditions. 
Here, different from \cite{DT17}, we add a relaxation term, $\bar{\mathcal{L}}_\delta p_\delta(\bx)$, in the second equation of \eqref{eq:stokes-nonlocal-intro}. 
{  It mimics the classical technique of stabilizing the approximation of incompressibility by adding a positive definite block to the original saddle point system. Although this results in a slightly compressible system, 
the stabilization term vanishes as $\delta\to 0$}
so that it does not destroy the approximation of the nonlocal formulation to the local limit.  Yet, this additional term is crucial 
 for { the stability and well-posedness in our case where smooth nonlocal kernels are used to define the nonlocal operators.}
 Indeed, the well-posed study in \cite{DT17} showed that, without extra relaxation, it is necessary to use singular kernels. A remedy was provided in \cite{hwi19} by incorporating non-radial nonlocal interactions.  The addition of the relaxation term enables the use of smooth kernels in the definition of the associated nonlocal operators which may allow more flexible practical implementation such as more conventional quadratures for smooth functions.
For the Fourier analysis of a related formulation with periodic boundary conditions, we refer to \cite{ZSD21}.

The rest of the paper is organized as follows. We give the { formulation of the nonlocal linear Stokes system in Section 2 together with some related assumptions and estimates}. Then the well-posedness of the
nonlocal model is established in Section 3. The vanishing nonlocality limit is analyzed in Section 4. In Section 5, we conclude with a summary and a discussion on future research.

\section{Nonlocal Stokes system with related assumptions and estimates}
\label{sec:nonlocal-stokes}
{
In this section we present the nonlocal Stokes model in more details, together with some basic assumptions on the geometry and kernel functions used to define the model, along with some related estimates.}

\subsection{Notation and assumptions}

First, we let $\Omega_\delta$ and $\mathcal{V}_\delta$ be subsets of $\Omega$ defined as
\begin{eqnarray}
  \label{eq:domain}
  \Omega_\delta=\left\{\bx\in \Omega: B\left(\bx,2\delta\right)\cap\p\Omega=\emptyset\right\},\quad \mathcal{V}_\delta=\Omega\backslash\Omega_\delta.
\end{eqnarray}
 The relation of $\Omega$, $\p\Omega$, $\Omega_\delta$ and $\mathcal{V}_\delta$ are showed in
Fig. \ref{fig:domain}.

Next, we state the following assumptions on the domain $\Omega$ and a kernel function $R(r)$. 
\begin{assumption}
\label{assumption}
\begin{itemize}
\item Assumptions on the computational domain: { $\M\in \mathbb{R}^n$ is open, bounded and connected. $\p \M$ is $C^2$ smooth.}

\item Assumptions on the kernel function $R(r)$:
\begin{itemize}
\item[(a)] (regularity) $R\in C^1[0,1]$;
\item[(b)] (positivity and compact support)
$R(r)\ge 0$ and $R(r) = 0$ for $\forall r >1$;
\item[(c)] (nondegeneracy)
 $\exists \gamma_0>0$ so that $R(r)\ge\gamma_0$ for $0\le r\le\frac{1}{2}$. \footnote{{ Here $\frac{1}{2}$ can be replaced by any constant in $(0,1)$.}}
\end{itemize}
\end{itemize}
\end{assumption}

Then, the rescaled kernels used in the definitions of the nonlocal operators are defined by
\begin{align}
\label{eq:kernel}
  R_\delta(\bx,\by)= C_\delta R\left(\frac{\|\bx-\by\|^2}{4\delta^2}\right),\quad 
\bar{R}_\delta(\bx,\by)= C_\delta\bar{R}\left(\frac{\|\bx-\by\|^2}{4\delta^2}\right){,}
\end{align}
where 
\begin{align}
\label{eq:kernelbar}
\bar{R}(r)=\int_{r}^{+\infty}R(s)\mathd s =\int_{r}^{1} R(s)\mathd s
,
\end{align}
which satisfies obviously
$$ \bar{R}'(r)=\frac{d}{dr}\bar{R}(r)=R(r),\;  \forall r\in \R^+, \quad
\text{and}\quad \bar{R}(r)=0, \;\forall r>1.$$
{The constant $C_\delta=\alpha_n\delta^{-n}$ in \eqref{eq:kernel}
is a normalization factor so that
\begin{align}
\label{eq:normal-barR}
\int_{\mathbb{R}^n}\bar{R}_\delta(\bx,\by)\mathd \by=\alpha_n S_n \int_0^1 \bar{R}(\frac{r^2}{4})r^{n-1}\mathd r=1
\end{align}
with $S_n$ denotes area of the unit sphere in $\mathbb{R}^n$.
With this normalization factor,  the local limits of  {$\mathcal{L}_\delta$, $\mathcal{G}_\delta$ and
$\mathcal{D}_\delta$
recover the classical Laplacian $\Delta$, gradient and divergence operators respectively as $\delta$ goes to 0.
Moreover,  $\bar{\mathcal{L}}_\delta$ also behaves like a nonlocal analog of $\beta_n \delta^2 \Delta$, that is, a scaled nonlocal Laplacian that vanishes in the local limit.
}}

\subsection{Nonlocal Stokes system with volume constraint}
By combining the volume constraint boundary condition of $\bfu$ and the average zero condition of $p$, we have the nonlocal Stokes model given as follows:

\begin{align}
  \label{eq:stokes-nonlocal}
\left\{\begin{array}{rclc}
\D \mathcal{L}_\delta \bfu_\delta(\bx) - \D \mathcal{G}_\delta p_\delta(\bx)&=&{\D \int_{\Omega} \bar{R}_\delta(\bx,\by) \bff(\by)\mathd \by},& \bx\in \Omega_\delta, \\
\D \mathcal{D}_\delta \bfu_\delta(\bx) -  \bar{\mathcal{L}}_\delta p_\delta(\bx)& = &0, & \bx\in \Omega,\:\\[.1cm]
\bfu_\delta(\bx)&=&0,&\bx\in \mathcal{V}_\delta,\\
\D \int_{\Omega}p_\delta(\bx)\mathd\bx&=&0.
\end{array}\right.
\end{align}
{
The integral operators have been defined in \eqref{eq:ops-L}-\eqref{eq:ops-Lbar}. A formal derivation of the nonlocal model is given in the appendix \ref{sec:appendix1}. 

Formally, the choices of normalization specified in this paper further imply that the local limits of  $\mathcal{L}_\delta$, $\mathcal{G}_\delta$ and
$\mathcal{D}_\delta$
recover the classical Laplacian $\Delta$, gradient and divergence operators respectively as $\delta$ goes to 0 \cite{MD16,Du19}.
Moreover,  $\bar{\mathcal{L}}_\delta$ also behaves like a nonlocal analog of $\beta_n \delta^2 \Delta$, that is, a scaled nonlocal Laplacian that vanishes in the local limit. Thus, we may see \eqref{eq:stokes-nonlocal} as a nonlocal extension of the local Stokes model \eqref{eq:stokes}-\eqref{eq:bc}.}

\begin{remark} For the study of the nonlocal model with periodic boundary condition on $\Omega=(0,1)^n$, we can use Fourier transform to get the Fourier symbols of the nonlocal operators, see the discussion in  \cite{ZSD21}. 
 \end{remark}
 
\subsection{Related estimates}

Next, we list several technical results of the kernel functions which will be used in the subsequent analysis.

{
\begin{lemma}
  \label{lem:tech}
Let  $R=R(r)$ be a kernel function satisfying Assumption \ref{assumption} and 
$R_\delta$, $\bar{R}_\delta$ be given by
\eqref{eq:kernel} and \eqref{eq:kernelbar} respectively. 
i) There exist a constant $C>0$, {independent of $\delta$},  such that
$$ |\nabla_{\bx}\bar{R}_\delta (\bx,\by)|\le \frac{ C}{\delta}
R_\delta(\bx,\by). $$
$$ |\nabla_{\by}\nabla_{\bx}\bar{R}_\delta (\bx,\by)|\le  \frac{C}{\delta^2} 
\left(|R'_\delta(\bx,\by)
|+|R_\delta(\bx,\by)|\right),$$
for any $\bx, \by\in \mathbb{R}^n$, where
$R'_\delta(\bx,\by)=C_\delta R'\left(\frac{\|\bx-\by\|^2}{4\delta^2}\right)$ and
$R'(r)=\frac{\mathd R(r)}{\mathd r}$;

ii)
 Let $\tilde{R}$ be a kernel function
 satisfying the Assumption \ref{assumption} (a) (b) and $\tilde{R}_\delta (\bx,\by)=\alpha_n\delta^{-n} \tilde{R}\left(\frac{\|\bx-\by\|^2}{4\delta^2}\right)$. 
 There exists a constant $\eta_0>0$ only dependent on $\Omega$
{and $\tilde{R}$}, such that for $\delta\le \eta_0$
$$\frac{\tilde{\omega}_n}{3}
<   {\tilde{\omega}_\delta(\bx):=}
\int_\Omega \tilde{R}_\delta(\bx,\by)\mathd \by \le \tilde{\omega}_n:=
\alpha_n S_n \int_0^1 \tilde{R}(\frac{r^2}{4})r^{n-1}\mathd r, $$

iii) Let $$K_\delta(\by,\bz)=\int_\Omega|\tilde{R}_\delta(\bx,\bz)\nabla_{\bx} \tilde{R}_\delta(\bx,\by)|\mathd \bx.$$
{for any  $\by, \bz\in \mathbb{R}^n$.}
There exist $C>0$ independent on $\delta$ such that 
$$K_\delta(\by,\bz)\le C{R}\left(\frac{\|\by-\bz\|^2}{32\delta^2}\right),$$
\end{lemma}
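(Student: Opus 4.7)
The proof of the three parts is essentially by direct calculation, with the main geometric input being the $C^2$ smoothness of $\partial\Omega$. The plan is to handle each part separately, exploiting the scaling structure of the kernels.

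For part (i), I would differentiate $\bar{R}_\delta(\bx,\by)=C_\delta\bar{R}(|\bx-\by|^2/(4\delta^2))$ by the chain rule. The first derivative is
\[
\nabla_{\bx}\bar{R}_\delta(\bx,\by)=C_\delta\,\bar{R}'\!\left(\tfrac{|\bx-\by|^2}{4\delta^2}\right)\frac{\bx-\by}{2\delta^2},
\]
and since $|\bar{R}'|=R$ and the support of $R(r)$ is $r\in[0,1]$, on the support $|\bx-\by|\le2\delta$, so a factor $|\bx-\by|/(2\delta^2)\le1/\delta$ appears and one obtains $|\nabla_{\bx}\bar R_\delta|\le C\delta^{-1}R_\delta$. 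For the mixed second derivative, differentiate once more: differentiating $\bar R'(\cdot)$ produces a factor of $R'_\delta$ together with another $(\bx-\by)/(2\delta^2)$, while differentiating the polynomial $(\bx-\by)/(2\delta^2)$ with respect to $\by$ yields $-I/(2\delta^2)$ multiplied by $R_\delta$. Summing the two contributions and using the support bound gives the claimed $\delta^{-2}(|R'_\delta|+|R_\delta|)$ estimate.

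For part (ii), the upper bound is immediate: $\tilde\omega_\delta(\bx)\le\int_{\R^n}\tilde R_\delta(\bx,\by)\,\mathd\by=\tilde\omega_n$ by the normalization identity. For the lower bound I would split on distance to the boundary. If $\mathrm{dist}(\bx,\p\Omega)\ge2\delta$, then $B(\bx,2\delta)\subset\Omega$ contains the whole support of $\by\mapsto\tilde R_\delta(\bx,\by)$, so equality $\tilde\omega_\delta(\bx)=\tilde\omega_n$ holds. Otherwise, I would use the $C^2$ regularity of $\p\Omega$ to locally flatten it: after a change of coordinates, the portion of $B(\bx,2\delta)$ lying outside $\Omega$ differs from a half-ball only by a set whose measure is $O(\delta\cdot \delta^n)$. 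By the rescaling $\by=\bx+2\delta\bz$ and dominated convergence, $\tilde\omega_\delta(\bx)/\tilde\omega_n\to\rho(\bx)\in[\tfrac12,1]$ uniformly in $\bx\in\overline\Omega$, so choosing $\eta_0$ small enough forces $\tilde\omega_\delta(\bx)>\tilde\omega_n/3$.

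For part (iii), I would start from a support argument. The factor $\tilde R_\delta(\bx,\bz)$ is supported in $\{|\bx-\bz|\le2\delta\}$ and $\nabla_{\bx}\tilde R_\delta(\bx,\by)$ in $\{|\bx-\by|\le2\delta\}$, so the integrand vanishes unless the two balls meet, which forces $|\by-\bz|\le4\delta$. In that case $|\by-\bz|^2/(32\delta^2)\le\tfrac12$, and by the nondegeneracy of $R$ in Assumption \ref{assumption}(c), $R(|\by-\bz|^2/(32\delta^2))\ge\gamma_0>0$. On the remaining region one bounds the integrand pointwise using $\|\tilde R\|_\infty$, $\|\tilde R'\|_\infty$ and the support bound $|\bx-\by|\le 2\delta$, then integrates over a region of appropriate volume; the result is absorbed by $CR(|\by-\bz|^2/(32\delta^2))$ since the latter is uniformly bounded below on the support. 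The main obstacle here is bookkeeping the various $C_\delta$ factors carefully so that the final estimate has the correct form; the geometric content is the essentially trivial double-support argument.
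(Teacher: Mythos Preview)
Your proposal is correct and tracks the paper's proof closely. Parts (i) and (iii) are identical in spirit: chain-rule differentiation for (i), and for (iii) the double-support reduction to $|\by-\bz|\le4\delta$, a pointwise bound integrated over a ball of radius $O(\delta)$, and the nondegeneracy $R\ge\gamma_0$ on $[0,\tfrac12]$ to manufacture the factor $R(|\by-\bz|^2/(32\delta^2))$. For part (ii) the paper argues slightly differently: rather than your explicit local-flattening estimate, it computes the pointwise limits $\lim_{\delta\to0}\tilde\omega_\delta(\bx)$ (equal to $\tilde\omega_n$ for interior points and $\tilde\omega_n/2$ on $\partial\Omega$) and then appeals to compactness of $\bar\Omega$ to pass from a pointwise $\eta_{\bx}$ to a uniform $\eta_0$; your uniform-convergence argument is a more direct way to make that compactness step rigorous. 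Your caution about the $C_\delta$ bookkeeping in (iii) is warranted---the paper's own computation in fact ends with a prefactor $C\,C_\delta/\delta$ rather than a $\delta$-free constant, and that is the form actually used in the subsequent lemma.
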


\begin{proof}
i) can be checked directly.\\
ii). { This estimate is classical for smooth mollifiers. For the sake of completeness, we give a brief proof here.}
{The upper bound}
is easy to prove using the 
{non-negativity
of $\tilde{R}_\delta$}.
\begin{align*}
   {\tilde{\omega}_\delta(\bx) =} \int_\Omega  { \tilde{R}}_\delta(\bx,\by)\mathd \by\le \int_{\mathbb{R}^n} { \tilde{R}}_\delta(\bx,\by)\mathd \by = { \tilde{\omega}_n}
\end{align*}
To prove the 
lower bound, we need to use the condition that $\partial \Omega$ is $C^2$
{ and $\tilde{R}_\delta$ is continuous and bounded}. Then for $\bx\in \partial \Omega$,
$$ {\lim_{\delta\rightarrow 0} \tilde{\omega}_\delta(\bx)=}
\lim_{\delta\rightarrow 0} \int_\Omega { \tilde{R}}_\delta(\bx,\by)\mathd \by =
{\alpha_n}
\int_{
{\bx+}\mathbb{R}^n_+} { \tilde{R}\left(\frac{\|\bx-\by\|^2}{4}\right)}\mathd \by =\frac{\tilde{\omega}_n}{2},
$$
where { $\mathbb{R}^n_+=\{\by=(y_1,\cdots,y_n)\in \mathbb{R}^n: y_1\ge 0\}$.}

On the other hand, for $\bx\in {\Omega} $, since $\Omega$ is open, 
$$ {\lim_{\delta\rightarrow 0} \tilde{\omega}_\delta(\bx)=}
\lim_{\delta\rightarrow 0} \int_\Omega { \tilde{R}}_\delta(\bx,\by)\mathd \by =
{\alpha_n} \int_{\mathbb{R}^n} { \tilde{R}\left(\frac{\|\bx-\by\|^2}{4}
\right) }\mathd \by = \tilde{\omega}_n
. $$
So, for any $\bx\in \bar{\Omega}=\Omega\cup \partial \Omega$, there exist $\eta_{\bx}>0$ such that for any $\delta\le \eta_{\bx}$, we have
$ {
\tilde{\omega}_\delta(\bx)}>{\tilde{\omega}_n}/{3}
$.
Using the compactness of $\bar{\Omega}$, there exists $\eta_0>0$ such that for any $\bx\in \bar{\Omega}$, $\delta\le \eta_{0}$, we have
$ {
\tilde{\omega}_\delta(\bx)}>{\tilde{\omega}_n}/{3}
$.

ii) When { $\|\by-\bz\|\ge 4\delta$}, we have $\max\{\|\bx-\bz\|, \|\bx-\by\|\}\ge 2\delta$, then using condition (a) and (b) in Assumption \ref{assumption},
$$\tilde{R}_\delta(\bx,\bz)\nabla_{\bx} \tilde{R}_\delta(\bx,\by)=0,\quad \forall \bx\in \Omega.$$
This gives that
$$K_\delta(\by,\bz)=0,\quad \forall \|\by-\bz\|\ge 4\delta.$$



When { $\|\by-\bz\|< 4\delta$}, we have $\frac{\|\by-\bz\|^2}{32\delta^2}<\frac{1}{2}$. Using
condition (c) in Assumption \ref{assumption}, 
\begin{align*}
    K_\delta(\by,\bz)=&\int_\Omega|\tilde{R}_\delta(\bx,\bz)\nabla_{\bx} \tilde{R}_\delta(\bx,\by)|\mathd \bx\\
    \le & \frac{1}{4\delta^2}\int_\Omega \|\bx-\by\||\tilde{R}_\delta(\bx,\bz)||\tilde{R}'_\delta(\bx,\by)|\mathd \bx\\
    \le& \frac{1}{2\delta}\int_\Omega |\tilde{R}_\delta(\bx,\bz)||\tilde{R}'_\delta(\bx,\by)|\mathd \bx\\
    \le & \frac{C_\delta^2}{2\delta}\int_{\Omega\cap B(\frac{\by+\bz}{2},2\delta)} |\tilde{R}\left(\frac{\|\bx-\bz\|^2}{4\delta^2}\right)||\tilde{R}'\left(\frac{\|\bx-\bz\|^2}{4\delta^2}\right)|\mathd \bx\\
    \le & \frac{\tilde{M} C_\delta^2}{2\delta}\left| B(\frac{\by+\bz}{2},2\delta)\right|\\
    \le & \frac{C \tilde{M}}{\delta\gamma_0} C_\delta\gamma_0
    \le  \frac{C \tilde{M}}{\delta\gamma_0} C_\delta R\left(\frac{\|\by-\bz\|^2}{32\delta^2}\right)
\end{align*}
where $\tilde{M}=\max_{r\in [0,1]} |\tilde{R}(r)\tilde{R}'(r)|$, $\gamma_0$ is the constant in condition (c) in Assumption \ref{assumption}, $C_\delta$ is the normalization factor in \eqref{eq:kernel}. 
\end{proof}
}

 \section{Well-posedness of the nonlocal Stokes system \eqref{eq:stokes-nonlocal}}
 \label{sec:wellpose}

In this section, we prove the well-posedness of the nonlocal Stokes system \eqref{eq:stokes-nonlocal}. More precisely, we show the following theorem.

\begin{theorem}
  \label{thm:wellpose}
Suppose that the Assumption \ref{assumption} is satisfied. For any { $\bff\in H^{-1}(\Omega)$}, there exits one and only one pair $(\bfu, p)$, such that
\begin{itemize}
\item[(a)] $\bfu\in H^1(\Omega_\delta)$, $p\in L^2(\Omega)$. In addition,
  \begin{align*}
    \|\bfu\|_{H^1(\Omega_\delta)}+\|p\|_{L^2(\Omega)}\le C { \|\bff\|_{H^{-1}(\Omega)}},
  \end{align*}
where $C>0$ is a constant that only depends on $\Omega$ and kernel function $R$.
\item[(b)] The pair $(\bfu,p)$ satisfies the nonlocal Stokes system \eqref{eq:stokes-nonlocal}.
\end{itemize}
\end{theorem}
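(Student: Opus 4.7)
The plan is to recast \eqref{eq:stokes-nonlocal} as a variational problem on the product space $V_\delta \times Q$, where $V_\delta = \{\bm{v} \in L^2(\Omega)^n : \bm{v}=0 \text{ on } \mathcal{V}_\delta\}$ encodes the volume constraint and $Q = \{q \in L^2(\Omega) : \int_\Omega q\, \mathd\bx = 0\}$ encodes the zero-mean condition on the pressure. One tests the first equation over $\Omega_\delta$ against $\bm{v} \in V_\delta$ (extended by zero) and the second over $\Omega$ against $q \in Q$. The structural identity underlying the analysis is the adjointness
\[
\int_\Omega \mathcal{G}_\delta p \cdot \bm{v}\, \mathd\bx = -\int_\Omega p\, \mathcal{D}_\delta \bm{v}\, \mathd\bx,
\]
which follows from the symmetry $R_\delta(\bx,\by) = R_\delta(\by,\bx)$ by swapping integration variables. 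Testing with $(\bm{v},q)=(-\bfu,p)$ cancels the off-diagonal couplings and yields the energy identity
\[
\tfrac{1}{2\delta^2}\!\iint_{\Omega^2}\! R_\delta|\bfu(\bx)-\bfu(\by)|^2\,\mathd\bx\,\mathd\by + \tfrac{1}{2}\!\iint_{\Omega^2}\! \bar R_\delta|p(\bx)-p(\by)|^2\,\mathd\bx\,\mathd\by = -\int_{\Omega_\delta}\! \bfu\cdot(\bar R_\delta \ast \bff)\,\mathd\bx,
\]
in which both diagonal contributions on the left are non-negative.

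Since $\bfu$ vanishes on the layer $\mathcal{V}_\delta$ of positive measure and $R$ is non-degenerate on $[0,\tfrac{1}{2}]$ by Assumption~\ref{assumption}(c), a nonlocal Poincar\'e inequality gives a $\delta$-uniform control of $\|\bfu\|_{L^2(\Omega)}$ by the velocity nonlocal energy. For the $H^1(\Omega_\delta)$ bound I would exploit the identity $\mathcal{G}_\delta p = -\nabla(\bar R_\delta\ast p)$ to rewrite the momentum equation on $\Omega_\delta$ as the fixed-point relation $\omega_\delta\bfu = R_\delta\ast \bfu + \delta^2 \nabla(\bar R_\delta\ast p) + \delta^2\,\bar R_\delta\ast \bff$, and then run a difference-quotient argument using Lemma~\ref{lem:tech} together with the fact that gradients of mollifications of $L^2$ and $H^{-1}$ data belong to $L^2$ with the appropriate uniform control.

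The principal obstacle is the uniform-in-$\delta$ $L^2$ bound on $p$: the stabilization term controls only the $\bar R_\delta$-seminorm of $p$, whose Poincar\'e constant against $L^2_0(\Omega)$ degenerates as $\delta \to 0$. To overcome this, I would establish a Ladyzhenskaya--Babu\v ska--Brezzi condition uniform in $\delta$: there exists $\beta>0$ depending only on $\Omega$ and $R$ such that
\[
\sup_{\bm{v}\in V_\delta\setminus\{0\}} \frac{-\int_\Omega p\,\mathcal{D}_\delta \bm{v}\,\mathd\bx}{\|\bm{v}\|_{H^1(\Omega_\delta)}} \ge \beta\,\|p\|_{L^2(\Omega)} \qquad \forall\, p \in Q.
\]
The natural route is to take the classical Bogovski\u{\i} lift $\bm{w}\in H^1_0(\Omega)^n$ with $\nabla\!\cdot\!\bm{w}=p$ and $\|\bm{w}\|_{H^1}\le C\|p\|_{L^2}$, multiply by a smooth cutoff vanishing in a $2\delta$-layer near $\partial\Omega$ so the result lies in $V_\delta$, and estimate $\|\mathcal{D}_\delta \bm{w} - \nabla\!\cdot\!\bm{w}\|_{L^2} = O(\delta)$ via Taylor expansion and the first and second moments of $R$; the $O(\delta)$ residual is absorbed by the pressure stabilization seminorm. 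With the energy estimate, the nonlocal Poincar\'e inequality for $\bfu$, and the uniform inf--sup for $p$ in hand, Brezzi's theory for stabilized saddle-point problems yields existence, uniqueness, and the asserted a priori bound; the right-hand side is a bounded linear functional on $V_\delta$ of norm $\lesssim \|\bff\|_{H^{-1}(\Omega)}$ because test functions extended by zero on $\mathcal{V}_\delta$ lie in $H^1_0(\Omega)^n$.
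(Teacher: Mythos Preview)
Your overall architecture matches the paper's: the energy identity from testing with $(\bfu,p)$, the nonlocal Poincar\'e inequality for $\bfu$ (Lemma~\ref{cor:l2-inner}), the fixed-point representation of $\bfu$ from the momentum equation to upgrade to $H^1(\Omega_\delta)$, and a Bogovski\u{\i} construction to control the pressure. The paper also uses Lax--Milgram (with $\delta$-dependent constants) for bare existence/uniqueness and only afterwards derives the $\delta$-uniform bounds, rather than appealing directly to Brezzi theory; this is a cosmetic difference.

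The genuine gap is in your pressure estimate. Two issues:
\begin{itemize}
\item The claim $\|\mathcal{D}_\delta \bm{w}-\nabla\!\cdot\!\bm{w}\|_{L^2}=O(\delta)$ via Taylor expansion needs $\bm{w}\in H^2$, whereas the Bogovski\u{\i} lift is only in $H^1$. For $H^1$ functions one has convergence but no rate, so the residual cannot be ``absorbed by the stabilization seminorm'' as stated.
\item Lifting on $\Omega$ and then multiplying by a cutoff $\chi$ destroys the divergence: $\nabla\!\cdot\!(\chi\bm{w})=\chi p+\nabla\chi\cdot\bm{w}$. The extra term has $L^2$-norm $O(\|p\|_{L^2})$ (Hardy gives $\|\bm{w}\|_{L^2(\text{layer})}\lesssim\delta\|\bm{w}\|_{H^1}$, which exactly cancels $|\nabla\chi|\sim\delta^{-1}$), so it competes with the main term rather than being lower order.
\end{itemize}

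The paper avoids both problems. It takes the Bogovski\u{\i} lift $\bm{v}\in H^1_0(\Omega_\delta)$ \emph{directly on $\Omega_\delta$} with $\nabla\!\cdot\!\bm{v}=p_\delta$ there, and proves in Appendix~\ref{sec:appendix4} that the Bogovski\u{\i} constant is uniform in $\delta$ via an explicit star-shaped decomposition of $\Omega_\delta$. No cutoff is needed. Then, instead of Taylor-expanding $\mathcal{D}_\delta\bm{v}$, it uses the second equation of \eqref{eq:stokes-nonlocal} (which contains the stabilization $\bar{\mathcal L}_\delta p_\delta$) to rewrite $\bar w_\delta p_\delta$ exactly; integrating $\int_{\Omega_\delta}\bar R_\delta\,\nabla\!\cdot\!\bm{v}$ by parts and combining with the $\bfu_\delta$ term produces the identity \eqref{eq:p-error-0-wp}--\eqref{eq:p-4} with test function $\bar{\bm v}=\bm{v}-\bfu_\delta$, yielding
\[
\frac{1}{2\delta^2}\int_{\Omega_\delta}\bar{\bm v}(\bx)\cdot\!\int_\Omega R_\delta(\bx,\by)(\bx-\by)p_\delta(\by)\,\mathd\by\,\mathd\bx\;\ge\;\int_{\Omega_\delta}p_\delta^2(\bx)\!\int_{\Omega_\delta}\bar R_\delta(\bx,\by)\,\mathd\by\,\mathd\bx.
\]
Testing the momentum equation against $\bar{\bm v}$ and using this, together with the energy identity, closes the estimate on $\|p_\delta\|_{L^2(\Omega_\delta)}$; Lemma~\ref{cor:l2-inner-ave} then upgrades to $\|p_\delta\|_{L^2(\Omega)}$. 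The point is that the stabilization term is used \emph{structurally} to produce an exact algebraic identity, not merely to absorb a consistency error.
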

In the proof of the well-posedness, we need several technical lemmas.

\begin{lemma} (\cite{SS14}) 
If $\delta$ is small enough, for any function
$u\in L^2(\Omega)$,
there exists a constant $C>0$, independent of $\delta$ and $u$, such that
\begin{eqnarray*}
   \int_\M\int_{\M}R\left(\frac{\|\bx-\by\|^2}{32\delta^2}\right) (u(\bx)-u(\by))^2\mathd \bx \mathd \by \le C
\int_\M\int_{\M}R\left(\frac{\|\bx-\by\|^2}{4\delta^2}\right) (u(\bx)-u(\by))^2\mathd \bx \mathd \by.  \nonumber
\end{eqnarray*}
\label{lem:bigt2smallt}
\end{lemma}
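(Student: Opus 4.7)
The plan is to reduce the double integral with kernel at scale $32\delta^2$ to the one with kernel at scale $4\delta^2$ by combining a direct pointwise domination on short pairs $(\bx,\by)$ with an intermediate-point (chain) argument on longer pairs.

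By the compact support of $R$ (condition (b) of Assumption~\ref{assumption}), the left-hand side integrand vanishes unless $\|\bx-\by\|\le 4\sqrt{2}\delta$. I would first split this region into a short part $S_1=\{\|\bx-\by\|^2\le 2\delta^2\}$ and a far part $S_2=\{2\delta^2<\|\bx-\by\|^2\le 32\delta^2\}$. On $S_1$, condition (c) of Assumption~\ref{assumption} gives $R\!\left(\frac{\|\bx-\by\|^2}{4\delta^2}\right)\ge \gamma_0$, while condition (a) yields a uniform upper bound $R\!\left(\frac{\|\bx-\by\|^2}{32\delta^2}\right)\le \|R\|_{L^\infty[0,1]}$. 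The two integrands are therefore pointwise comparable and the contribution of $S_1$ is at most a constant multiple of the right-hand side.

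On the far part $S_2$, I would use an intermediate-point chain trick. Fix an integer $N$ large enough (for instance $N=8$) that $\|\bx-\by\|/N$ has a definite margin below $\sqrt{2}\delta$, set $\bz_0=\bx$, $\bz_N=\by$, and apply the elementary Cauchy--Schwarz-style inequality
\[
(u(\bx)-u(\by))^2\le N\sum_{i=0}^{N-1}(u(\bz_{i+1})-u(\bz_i))^2.
\]
Next I would multiply by $R\!\left(\frac{\|\bx-\by\|^2}{32\delta^2}\right)$ together with a normalized weight that concentrates the interior points $\bz_1,\dots,\bz_{N-1}$ in balls of radius $\sim\delta$ about the equispaced offsets $\bx+(i/N)(\by-\bx)$, so that each consecutive distance $\|\bz_{i+1}-\bz_i\|$ stays below $\sqrt{2}\delta$ and each factor $R\!\left(\frac{\|\bz_{i+1}-\bz_i\|^2}{4\delta^2}\right)$ is bounded below by $\gamma_0$. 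After integrating over all intermediate variables and applying Fubini, each of the $N$ summands can be relabeled with $\bx'=\bz_i$, $\by'=\bz_{i+1}$; the remaining integrations over $\bx,\by$ and over the other $N-2$ interior points are controlled by an iterated convolution estimate in the spirit of part (iii) of Lemma~\ref{lem:tech}, yielding an overall bound of the form $C\iint R\!\left(\frac{\|\bx'-\by'\|^2}{4\delta^2}\right)(u(\bx')-u(\by'))^2\,d\bx'\,d\by'$.

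The main obstacle will be ensuring that, uniformly for $\bx,\by\in\Omega$, the admissible configuration set for the chain $\bz_1,\dots,\bz_{N-1}\in\Omega$ has volume of order $\delta^{n(N-1)}$ even when $\bx$ or $\by$ lies close to $\partial\Omega$. This is exactly where the $C^2$ regularity of $\partial\Omega$ and the smallness of $\delta$ (of the form $\delta\le\eta_0$) will enter, in the same spirit as the proof of the lower bound in part (ii) of Lemma~\ref{lem:tech}: at every boundary point a half-ball of radius $\sim\delta$ lies in $\Omega$, so the chain can be shifted slightly inward while keeping all consecutive distances below $\sqrt{2}\delta$. Combining the estimates on $S_1$ and $S_2$ then yields the desired inequality with a constant $C$ independent of $\delta$ and $u$.
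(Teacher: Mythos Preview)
The paper does not prove this lemma; it is quoted directly from \cite{SS14} and used as a black box, with only the remark that analogous scaling results for other kernels appear in \cite{DTWY21}. So there is no ``paper's own proof'' to compare against here.

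Your chaining argument is the standard route to such estimates and is essentially what the cited references do: dominate short pairs pointwise via the nondegeneracy condition (c), and for longer pairs insert $N-1$ intermediate points, telescope $u(\bx)-u(\by)$, average over the intermediate points in $\delta$-balls, and use Fubini to reduce each summand to a copy of the right-hand side. Your identification of the boundary issue (ensuring the chain stays inside $\Omega$ with volume $\sim\delta^{n(N-1)}$ for the configuration set) is the right place to invoke the $C^2$ regularity and the smallness of $\delta$. One minor caution: rather than appealing loosely to ``an iterated convolution estimate in the spirit of Lemma~\ref{lem:tech}(iii)'', it is cleaner to normalize the averaging weights explicitly so that after fixing the active pair $(\bz_i,\bz_{i+1})$ the integrals over the remaining variables produce exactly the factor $1$; otherwise the bookkeeping of the $\delta$-powers can go astray.
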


{Similar results concerning the scaling of the nonlocal interaction neighborhood  like the above one
can also be found in  \cite{DTWY21} for other types of kernels including fractional ones.}

{Next, we consider an extension to a similar result shown in \cite{Shi-vc}.
.
\begin{lemma} 
\label{cor:coercivity-inner}
For any function {$u\in L_2(\mathbb{R}^n)$ and vanish outside $\Omega_\delta$, i.e. $u(\bx)=0$ for $\bx\in \mathbb{R}^n\backslash \Omega_\delta$},
there exists a constant $C>0$ independent on $\delta$, such that
 \begin{eqnarray*}
  &&  \frac{1}{\delta^2}\int_{\Omega_\delta}\int_{\Omega_\delta} R_\delta(\bx,\by)(u(\bx)-u(\by))^2\mathd\bx\mathd\by
+\frac{1}{\delta^2}\int_{\M_\delta}u^2(\bx)\left(\int_{\V_\delta} R_\delta(\bx,\by)\mathd \by\right)\mathd\bx \nonumber\\
&&\quad \ge C\int_{\Omega} |\nabla v|^2\mathd \bx,\nonumber
  \end{eqnarray*}
where
\begin{eqnarray}
v(\bx)=\frac{1}{\tilde{w}_\delta(\bx)}\int_{\Omega_\delta}\tilde{R}_\delta(\bx,\by)u(\by)\mathd \by=\frac{1}{\tilde{w}_\delta(\bx)}\int_{\Omega}\tilde{R}_\delta(\bx,\by)u(\by)\mathd \by, \nonumber
\end{eqnarray}
and 
$$\D \tilde{w}_\delta(\bx) = \int_{\Omega}\tilde{R}_\delta\left(\bx,\by\right)\mathd \by, \quad \text{where}\quad
\tilde{R}_\delta(\bx,\by)=C_\delta\tilde{R}\left(\frac{|\bx-\by|^2}{4\delta^2}\right),
$$ 
and
$\tilde{R}$ is a kernel function satisfying condition (a)-(b)-(c) in Assumption \ref{assumption}. 
\end{lemma}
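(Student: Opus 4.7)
The plan is to control $\int_\Omega|\nabla v|^2\,\mathd\bx$ from above by the two positive terms on the left-hand side, proceeding in four steps. First, I derive a symmetric representation of $\nabla v$. Writing $v(\bx) = \tilde{w}_\delta(\bx)^{-1}\int_\Omega \tilde{R}_\delta(\bx,\by)u(\by)\,\mathd\by$ and exploiting the identity $\int_\Omega \nabla_\bx \tilde{R}_\delta(\bx,\by)\,\mathd\by = \nabla \tilde{w}_\delta(\bx)$, the quotient rule combined with a Fubini-type rearrangement yields
\[
\nabla v(\bx) = \frac{1}{\tilde{w}_\delta(\bx)^2}\int_\Omega\!\int_\Omega \nabla_\bx\tilde{R}_\delta(\bx,\by)\,\tilde{R}_\delta(\bx,\bz)\bigl(u(\by)-u(\bz)\bigr)\,\mathd\by\,\mathd\bz.
\]
This symmetrization is the key trick: it replaces the bare factor $u(\by)$ by the antisymmetric difference $u(\by)-u(\bz)$, which is what will eventually pair with the LHS bilinear form.

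Next I apply Cauchy--Schwarz to this double integral with weight $|\nabla_\bx \tilde{R}_\delta(\bx,\by)|\,\tilde{R}_\delta(\bx,\bz)$. Lemma~\ref{lem:tech}(ii) gives a uniform lower bound on $\tilde{w}_\delta(\bx)$, while the routine estimate $\int_\Omega|\nabla_\bx\tilde{R}_\delta(\bx,\by)|\,\mathd\by\leq C/\delta$ (direct from the gradient bound in Lemma~\ref{lem:tech}(i), applied to $\tilde{R}$) controls one of the marginals. This produces the pointwise bound
\[
|\nabla v(\bx)|^2 \leq \frac{C}{\delta}\int_\Omega\!\int_\Omega |\nabla_\bx\tilde{R}_\delta(\bx,\by)|\,\tilde{R}_\delta(\bx,\bz)\bigl(u(\by)-u(\bz)\bigr)^2\,\mathd\by\,\mathd\bz.
\]

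In the third step, I integrate over $\bx\in\Omega$ and switch the order of integration. The inner $\bx$-integral is exactly $K_\delta(\by,\bz)$, which by Lemma~\ref{lem:tech}(iii) is controlled by the profile $R(\|\by-\bz\|^2/(32\delta^2))$. After absorbing the $C_\delta$ normalization, Lemma~\ref{lem:bigt2smallt} lets me shrink the interaction scale from $32\delta^2$ back to $4\delta^2$, so the bound becomes
\[
\int_\Omega |\nabla v|^2\,\mathd\bx \leq \frac{C}{\delta^2}\int_{\R^n}\!\int_{\R^n}R_\delta(\by,\bz)\bigl(u(\by)-u(\bz)\bigr)^2\,\mathd\by\,\mathd\bz.
\]

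Finally, I decompose the $\R^n\times\R^n$ integration using $u\equiv 0$ outside $\Omega_\delta$. The piece with both variables in $\Omega_\delta$ yields the first term on the LHS of the lemma. For the cross piece, only $u(\by)^2$ survives; moreover, the geometric fact that $\by\in\Omega_\delta$ implies $B(\by,2\delta)\subset\Omega$ forces the $R_\delta$-support in $\bz$ to lie inside $\Omega$, so $\bz$ is automatically confined to $\V_\delta$. This reproduces the boundary penalty term exactly and completes the estimate. The main obstacle is the symmetrization in Step~1, without which Cauchy--Schwarz would produce a mass-type term in $u$ that the LHS cannot absorb; the final geometric localization, though conceptually simple, is also essential in tying the cross-interaction contribution precisely to $\V_\delta$ rather than the larger set $\R^n\setminus\Omega_\delta$.
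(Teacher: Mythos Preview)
Your proposal is correct and follows essentially the same approach as the paper's proof: the same quotient-rule symmetrization of $\nabla v$, the same Cauchy--Schwarz splitting with weight $|\nabla_\bx\tilde{R}_\delta|\,\tilde{R}_\delta$, the same reduction to $K_\delta(\by,\bz)$ via Fubini and Lemma~\ref{lem:tech}(iii), followed by Lemma~\ref{lem:bigt2smallt} and the $\Omega_\delta/\V_\delta$ decomposition. Your final step is in fact slightly more explicit than the paper's, since you spell out the geometric reason (namely $B(\by,2\delta)\subset\Omega$ for $\by\in\Omega_\delta$) why the cross term lands in $\V_\delta$ rather than all of $\R^n\setminus\Omega_\delta$.
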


\begin{proof}
For any $\bx\in \M$, we have
\begin{align*}
  \nabla v(\bx)=&\frac{1}{\tilde{w}_\delta(\bx)}\int_{\Omega}\nabla_{\bx} \tilde{R}_\delta(\bx,\by){u}(\by)\mathd \by-\frac{\nabla \tilde{w}_\delta(\bx)}{\tilde{w}^2_\delta(\bx)}\int_{\Omega}( \tilde{R}_\delta(\bx,\by)){u}(\by)\mathd \by\\
  =& \frac{1}{\tilde{w}^2_\delta(\bx)}\int_{\Omega}\int_{\Omega}\tilde{R}_\delta(\bx,\bz)\nabla_{\bx} \tilde{R}_\delta(\bx,\by)({u}(\by)-{u}(\bz))\mathd \by\mathd \bz
\end{align*}
This leads to
\begin{align*}
  &\int_{\Omega} |\nabla v(\bx)|^2\mathd\bx\\
  &\quad =  \int_\Omega \frac{1}{\tilde{w}^4_\delta(\bx)}\left|\int_{\Omega}\int_{\Omega}\tilde{R}_\delta(\bx,\bz)\nabla_{\bx} \tilde{R}_\delta(\bx,\by)(u(\by)-u(\bz))\mathd \by\mathd \bz\right|^2\mathd \bx\\
   &\quad \le  \frac{1}{\tilde{\omega}^4_{\min}}\int_\Omega\left(\int_{\Omega}\int_{\Omega}|\tilde{R}_\delta(\bx,\bz)\nabla_{\bx} \tilde{R}_\delta(\bx,\by)|\mathd \by\mathd \bz\right)\\
   & \qquad\quad \left( \int_{\Omega}\int_{\Omega}|\tilde{R}_\delta(\bx,\bz)\nabla_{\bx} \tilde{R}_\delta(\bx,\by)|(u(\by)-u(\bz))^2\mathd \by\mathd \bz \right)  \mathd \bx\\
   &\quad \le  \frac{C}{\delta\tilde{\omega}^4_{\min}}\int_{\Omega}\int_{\Omega}K_\delta(\by,\bz)(u(\by)-u(\bz))^2\mathd \by\mathd \bz
\end{align*}
with $\tilde{\omega}_{\min}=\frac{1}{3}\alpha_n S_n\int_0^1 \tilde{R}(\frac{r^2}{4})r^{n-1}\mathd r$ given in Lemma \ref{lem:tech} and
 
$$K_\delta(\by,\bz)=\int_\Omega|\tilde{R}_\delta(\bx,\bz)\nabla_{\bx} \tilde{R}_\delta(\bx,\by)|\mathd \bx.$$
In the last inequality, we use the following estimate,
\begin{align*}
  &\int_{\Omega}\int_{\Omega}\left|\tilde{R}_\delta(\bx,\bz)\right|\left|\nabla_{\bx} \tilde{R}_\delta(\bx,\by)\right|\mathd \by\mathd \bz\\
  \le&\frac{1}{4\delta^2}\int_\Omega \int_{\Omega}\|\bx-\by\|\left|\tilde{R}'_\delta(\bx,\by)\right|\left|\tilde{R}_\delta(\bx,\bz)\right|\mathd \by\mathd \bz\\
  \le&\frac{1}{2\delta}\int_\Omega \int_{\Omega}\left|\tilde{R}'_\delta(\bx,\by)\right|\left|\tilde{R}_\delta(\bx,\bz)\right|\mathd \by\mathd \bz
  \le  \frac{C}{\delta}
\end{align*}
with
$\tilde{R}'_\delta(\bx,\by)={C_\delta\tilde{R}'\left(\frac{|\bx-\by|^2}{4\delta^2}\right)}$ and $\tilde{R}'(r)=\frac{\mathd \tilde{R}}{\mathd r}$.

Finally, Lemma \ref{lem:tech} ii) gives that
\begin{align*}
  &\int_{\Omega_\delta} |\nabla v(\bx)|^2\mathd\bx\\
\le & \; \frac{C}{\delta^2}\int_{\Omega}\int_{\Omega} C_\delta R\left(\frac{|\bx-\by|^2}{32\delta^2}\right)(u(\bx)-u(\by))^2\mathd \bx\mathd\by\\
\le & \; \frac{C}{\delta^2}\int_{\Omega}\int_{\Omega} C_\delta R\left(\frac{|\bx-\by|^2}{4\delta^2}\right)(u(\bx)-u(\by))^2\mathd \bx\mathd\by\\
=&\; \frac{C}{\delta^2}\int_{\Omega_\delta}\int_{\Omega_\delta} R_\delta(\bx,\by)(u(\bx)-u(\by))^2\mathd \bx\mathd\by+
\frac{2C}{\delta^2}\int_{\V_\delta}\left(\int_{\Omega_\delta} R_\delta(\bx,\by)u(\by)^2\mathd \by\right)\mathd\bx
\end{align*}
The second inequality comes from Lemma \ref{lem:bigt2smallt}.
\end{proof}

Using above Lemma, it is easy to get a nonlocal Poinc\'{a}re inequality for the special kernels, Lemma \ref{cor:l2-inner}.
\begin{lemma}
  \label{cor:l2-inner}
 For any function {$u\in L_2(\mathbb{R}^n)$ and vanish outside $\Omega_\delta$},
there exists a constant $C>0$ independent on $\delta$, such that
\begin{eqnarray}
\frac{1}{\delta^2}\int_{\M_\delta}\int_{\M_\delta}R_\delta(\bx,\by)(u(\bx)-u(\by))^2\mathd\bx\mathd\by
+{\color{black}\frac{1}{\delta^2}}\int_{\M_\delta}u^2(\bx)\left(\int_{\V_\delta}R_\delta(\bx,\by)\mathd \by\right)\mathd\bx
\geq C\|u\|_{L_2(\M_\delta)}^2,\nonumber
\end{eqnarray}
as long as $\delta$ small enough.
\end{lemma}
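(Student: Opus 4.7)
The plan is to reduce the desired Poincar\'e-type estimate on $u$ to the classical Poincar\'e inequality applied to the smoothed function $v$ furnished by Lemma~\ref{cor:coercivity-inner} (taking $\tilde R = R$ there, so that $v$ is a convolution-type average of $u$). The two ingredients I need are: (i) that $v\in H^1_0(\Omega)$, so that a classical Poincar\'e inequality converts the already-controlled $\|\nabla v\|_{L^2(\Omega)}^2$ into control of $\|v\|_{L^2(\Omega)}^2$; and (ii) a quantitative comparison between $u$ and $v$ in $L^2(\Omega_\delta)$ whose residual can be absorbed by the left-hand side.

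For (i), I would first show that $v$ vanishes on $\partial\Omega$. For any $\bx\in\partial\Omega$, the support of $\tilde R_\delta(\bx,\cdot)$ is contained in $B(\bx,2\delta)$; if $\by\in\Omega\cap B(\bx,2\delta)$, then $\bx\in B(\by,2\delta)\cap\partial\Omega$, so by the definition \eqref{eq:domain} of $\Omega_\delta$ we have $\by\in\V_\delta$, where $u\equiv0$. Hence $v\equiv0$ on $\partial\Omega$. Since Lemma~\ref{cor:coercivity-inner} already gives $v\in H^1(\Omega)$, we conclude $v\in H^1_0(\Omega)$, and the classical Poincar\'e inequality on $\Omega$ gives
\begin{equation*}
\|v\|_{L^2(\Omega)}^2 \le C_P \|\nabla v\|_{L^2(\Omega)}^2 \le C\,\mathrm{LHS},
\end{equation*}
where $\mathrm{LHS}$ denotes the left-hand side of the inequality to be proved.

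For (ii), I would estimate $u-v$ pointwise by using the averaging definition of $v$: writing
\begin{equation*}
v(\bx)-u(\bx)=\frac{1}{\tilde w_\delta(\bx)}\int_{\Omega}\tilde R_\delta(\bx,\by)(u(\by)-u(\bx))\,\mathd\by,
\end{equation*}
a Cauchy--Schwarz (Jensen) step gives $|v(\bx)-u(\bx)|^2 \le \tilde w_\delta(\bx)^{-1}\int_\Omega \tilde R_\delta(\bx,\by)(u(\by)-u(\bx))^2\mathd\by$. Integrating over $\Omega_\delta$, splitting the inner integral as $\Omega=\Omega_\delta\cup\V_\delta$, and using $u\equiv 0$ on $\V_\delta$ together with the lower bound on $\tilde w_\delta$ from Lemma~\ref{lem:tech}~(ii), I obtain
\begin{equation*}
\int_{\Omega_\delta}|v-u|^2\mathd\bx \le C\!\!\int_{\Omega_\delta}\!\!\int_{\Omega_\delta} R_\delta(\bx,\by)(u(\bx)-u(\by))^2\mathd\bx\mathd\by + C\!\int_{\Omega_\delta}\!\!u(\bx)^2\!\int_{\V_\delta}\!R_\delta(\bx,\by)\mathd\by\mathd\bx \le C\delta^2\,\mathrm{LHS}.
\end{equation*}

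Putting the pieces together via $\|u\|_{L^2(\Omega_\delta)}^2\le 2\|v\|_{L^2(\Omega_\delta)}^2+2\|u-v\|_{L^2(\Omega_\delta)}^2\le (2C_P+C\delta^2)\,\mathrm{LHS}$ yields the claim for all $\delta$ small enough. The only delicate point is the verification that $v$ is truly in $H^1_0(\Omega)$ (step (i)); once this is settled, the remaining estimates are routine Jensen/Cauchy--Schwarz manipulations and an application of the previous lemma.
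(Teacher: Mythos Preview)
Your proposal is correct and follows essentially the same route as the paper: define $v$ as the kernel average of $u$ with $\tilde R=R$, observe that $v=0$ on $\partial\Omega$ by the definition of $\Omega_\delta$, apply Lemma~\ref{cor:coercivity-inner} together with the classical Poincar\'e inequality to control $\|v\|_{L^2(\Omega)}$, and then estimate $\|u-v\|_{L^2(\Omega_\delta)}$ via Cauchy--Schwarz on the averaging identity. The paper carries out exactly these steps, so there is nothing to add.
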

\begin{proof}
Let 
$$v(\bx)=\frac{1}{w_\delta(\bx)}\int_{\Omega_\delta}R_\delta(\bx,\by)u(\by)\mathd \by=\frac{1}{w_\delta(\bx)}\int_{\Omega}R_\delta(\bx,\by)u(\by)\mathd \by$$
Using the definition of $\Omega_\delta$, 
$$v(\bx)=0,\quad \forall \bx\in \partial \Omega.$$
Then Lemma \ref{cor:coercivity-inner} and Poinc\'{a}re inequality imply that
$$\|v\|^2_{L^2(\Omega)}\le \frac{C}{\delta^2}\left(\int_{\M_\delta}\int_{\M_\delta}R_\delta(\bx,\by)(u(\bx)-u(\by))^2\mathd\bx\mathd\by
+\int_{\M_\delta}u^2(\bx)\left(\int_{\V_\delta}R_\delta(\bx,\by)\mathd \by\right)\mathd\bx\right).$$
On the other hand, for $\bx\in \Omega_\delta$
\begin{align*}
    u(x)-v(x)=\frac{1}{w_\delta(\bx)}\int_{\Omega}R_\delta(\bx,\by)(u(\bx)-u(\by))\mathd \by
\end{align*}
such that
\begin{align*}
    &\|u-v\|_{L^2(\Omega_\delta)}^2\\
    \le& \frac{1}{\omega_{\min}^2}\int_\Omega\left(\int_{\Omega}R_\delta(\bx,\by)\mathd \by\right)\int_{\Omega}R_\delta(\bx,\by)(u(\bx)-u(\by))^2\mathd \by\mathd 
    \bx\\
    \le &\frac{\omega_{\max}}{\omega_{\min}^2}\int_\Omega\int_{\Omega}R_\delta(\bx,\by)(u(\bx)-u(\by))^2\mathd \by\mathd 
    \bx\\
    =& \frac{\omega_{\max}}{\omega_{\min}^2}\left(\int_{\M_\delta}\int_{\M_\delta}R_\delta(\bx,\by)(u(\bx)-u(\by))^2\mathd\bx\mathd\by
+2\int_{\M_\delta}u^2(\bx)\left(\int_{\V_\delta}R_\delta(\bx,\by)\mathd \by\right)\mathd\bx\right)
 \end{align*}
 where $\omega_{\min} =\frac{1}{3}\alpha_n S_n\int_0^1 R(\frac{r^2}{4})r^{n-1}\mathd r$ and $\omega_{\max} =\alpha_n S_n\int_0^1 R(\frac{r^2}{4})r^{n-1}\mathd r$ as given in Lemma \ref{lem:tech}.
\end{proof}
\begin{remark}
Support of $\int_{\V_\delta}R_\delta(\bx,\by)\mathd \by$ is a narrow band adjacent to $\partial \Omega$ with the width of $4\delta$. So the second term in Lemma \ref{cor:l2-inner}, $\frac{1}{\delta^2}\int_{\M_\delta}u^2(\bx)\left(\int_{\V_\delta}R_\delta(\bx,\by)\mathd \by\right)\mathd\bx$, is used to control $u(\bx)$ near the boundary while the first term controls the fluctuation in the interior. Lemma \ref{cor:l2-inner} is actually very natural following the spirit of the Poinc\'{a}re inequality. For more general discussions, we refer to, e.g., \cite{MD14,MD16,Du19} and the references cited therein.
\end{remark}
}
\begin{lemma}
  \label{cor:l2-inner-ave}
 For any function {$p\in L_2(\Omega)$ with $\int_{\Omega_\delta}p(\bx)\mathd \bx=0$},
there exists a constant $C>0$ independent on $\delta$, such that
\begin{eqnarray}
\frac{1}{\delta^2}\int_{\M}\int_{\M}\bar{R}_\delta(\bx,\by)(p(\bx)-p(\by))^2\mathd\bx\mathd\by
\geq C\|p\|_{L_2(\M)}^2,\nonumber
\end{eqnarray}
as long as $\delta$ small enough.
\end{lemma}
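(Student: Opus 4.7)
The idea is to reduce to the classical Poincar\'{e}--Wirtinger inequality via mollification, following the strategy of Lemmas~\ref{cor:coercivity-inner} and \ref{cor:l2-inner}, but adapted to the zero-mean (rather than zero-boundary-value) constraint and to the fact that the mean condition is imposed on $\Omega_\delta$ rather than on all of $\Omega$.

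First, I would introduce the smoothed pressure
\[
v(\bx) = \frac{1}{\bar{\omega}_\delta(\bx)}\int_\Omega \bar{R}_\delta(\bx,\by) p(\by)\,\mathd\by,\qquad \bar{\omega}_\delta(\bx) = \int_\Omega \bar{R}_\delta(\bx,\by)\,\mathd\by,
\]
which is well-defined thanks to the uniform two-sided bounds on $\bar{\omega}_\delta$ provided by Lemma~\ref{lem:tech}(ii). Reproducing the identity used in the proof of Lemma~\ref{cor:coercivity-inner},
\[
\nabla v(\bx) = \frac{1}{\bar{\omega}_\delta^2(\bx)}\int_\Omega\int_\Omega \bar{R}_\delta(\bx,\bz)\,\nabla_{\bx} \bar{R}_\delta(\bx,\by)\,(p(\by)-p(\bz))\,\mathd\by\,\mathd\bz,
\]
then applying Cauchy--Schwarz, Lemma~\ref{lem:tech}(iii) with $\tilde{R}=\bar{R}$, and Lemma~\ref{lem:bigt2smallt} to absorb the $32\delta^2$-scaled kernel that naturally appears, I would obtain
\[
\|\nabla v\|_{L^2(\Omega)}^2 \leq \frac{C}{\delta^2}\int_\Omega\int_\Omega \bar{R}_\delta(\bx,\by)(p(\bx)-p(\by))^2\,\mathd\bx\,\mathd\by.
\]
The final conversion from the $R_\delta$-type kernel to $\bar{R}_\delta$ is handled by a short chaining argument: for $\|\bx-\by\|\leq 2\delta$ one writes $(p(\bx)-p(\by))^2 \leq 2(p(\bx)-p(\bz))^2+2(p(\bz)-p(\by))^2$ and averages $\bz$ over a ball of radius $\delta/4$ around $(\bx+\by)/2$, noting that on this range $\bar{R}_\delta(\bx,\bz)$ is bounded below by a constant multiple of $C_\delta$ thanks to condition (c) of Assumption~\ref{assumption}. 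A parallel Cauchy--Schwarz estimate based on $p(\bx)-v(\bx) = \bar{\omega}_\delta(\bx)^{-1}\int_\Omega \bar{R}_\delta(\bx,\by)(p(\bx)-p(\by))\,\mathd\by$ yields
\[
\|p-v\|_{L^2(\Omega)}^2 \leq C\int_\Omega\int_\Omega \bar{R}_\delta(\bx,\by)(p(\bx)-p(\by))^2\,\mathd\bx\,\mathd\by.
\]

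The main obstacle is controlling the mean $\bar{v} = |\Omega|^{-1}\int_\Omega v$, because the hypothesis $\int_{\Omega_\delta} p = 0$ does not translate directly into $\int_\Omega v = 0$. To close this gap, I would write by Fubini
\[
\int_\Omega v(\bx)\,\mathd\bx = \int_\Omega p(\by)\,\psi(\by)\,\mathd\by,\qquad \psi(\by) = \int_\Omega \frac{\bar{R}_\delta(\bx,\by)}{\bar{\omega}_\delta(\bx)}\,\mathd\bx.
\]
For $\by$ with $\dist(\by,\p\Omega)>2\delta$ one has $\bar{\omega}_\delta(\bx)=\bar{\omega}_n$ on the support of $\bar{R}_\delta(\cdot,\by)$, so $\psi(\by)=1$; for $\by$ in the $O(\delta)$-thick boundary strip $\psi$ is bounded with $|\psi-1|\leq C$. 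Splitting $\int_\Omega p\psi = \int_\Omega p + \int_\Omega p(\psi-1)$ and using $\int_\Omega p = \int_{\V_\delta}p$ (from the zero-mean assumption on $\Omega_\delta$), Cauchy--Schwarz together with $|\V_\delta|=O(\delta)$ and $\|\psi-1\|_{L^2(\Omega)}=O(\sqrt{\delta})$ yields $|\bar{v}| \leq C\sqrt{\delta}\,\|p\|_{L^2(\Omega)}$, and hence $\|\bar{v}\|_{L^2(\Omega)}\leq C\sqrt{\delta}\,\|p\|_{L^2(\Omega)}$.

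Combining these ingredients with the classical Poincar\'{e}--Wirtinger inequality $\|v-\bar{v}\|_{L^2(\Omega)}\leq C_\Omega\|\nabla v\|_{L^2(\Omega)}$, I arrive at
\[
\|p\|_{L^2(\Omega)} \leq \|p-v\|_{L^2(\Omega)} + \|v-\bar{v}\|_{L^2(\Omega)} + \|\bar{v}\|_{L^2(\Omega)} \leq C\,\mathcal{E}^{1/2} + C\sqrt{\delta}\,\|p\|_{L^2(\Omega)},
\]
where $\mathcal{E}$ denotes the right-hand side of the claimed inequality. For $\delta$ small enough that $C\sqrt{\delta}\leq 1/2$ the last term is absorbed into the left, yielding the lemma after squaring. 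The crux of the argument is the mean-control step, where the $O(\delta)$-smallness of the volume-constraint region $\V_\delta$ must be leveraged carefully to bridge the mismatch between the zero-mean hypothesis on $\Omega_\delta$ and the Poincar\'{e}--Wirtinger inequality on $\Omega$.
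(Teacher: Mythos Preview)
Your argument is correct and follows a genuinely different route from the paper. The paper does not mollify $p$ at all: it first invokes a ready-made nonlocal Poincar\'{e} inequality from \cite{SS14} (valid for functions with zero mean on $\Omega_\delta$) to control $\|p\|_{L^2(\Omega_\delta)}$ by the nonlocal energy restricted to $\Omega_\delta\times\Omega_\delta$, and then upgrades this to $\|p\|_{L^2(\Omega)}$ by a direct computation using the rescaled kernel $\bar{R}_{4\delta}$, for which $\int_{\Omega_\delta}\bar{R}_{4\delta}(\bx,\by)\,\mathd\by$ is uniformly bounded below even when $\bx\in\mathcal{V}_\delta$; a final appeal to Lemma~\ref{lem:bigt2smallt} converts $\bar{R}_{4\delta}$ back to $\bar{R}_\delta$.

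Your approach instead parallels the proofs of Lemmas~\ref{cor:coercivity-inner}--\ref{cor:l2-inner}: mollify, reduce to the classical Poincar\'{e}--Wirtinger inequality on the fixed domain $\Omega$, and handle the mismatch between ``zero mean on $\Omega_\delta$'' and ``zero mean on $\Omega$'' by the $O(\sqrt{\delta})$ absorption. This is more self-contained (you avoid importing the nonlocal Poincar\'{e} from \cite{SS14}) and conceptually uniform with the other lemmas, at the cost of the extra mean-control step and the kernel conversion from $R_\delta$ to $\bar{R}_\delta$. The paper's argument is shorter but relies on the external citation; yours makes the mechanism explicit. One minor remark: your chaining argument to pass from $R_\delta$ to $\bar{R}_\delta$ is essentially a one-step instance of Lemma~\ref{lem:bigt2smallt} applied to $\bar{R}$, which does satisfy (a)--(c) of Assumption~\ref{assumption} (for (c), note $\bar{R}(r)\geq\int_r^{1/2}R\geq\gamma_0(1/2-r)$ for $r\leq 1/2$, so nondegeneracy holds on, say, $[0,1/4]$, which is enough by the footnote to Assumption~\ref{assumption}).
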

\begin{proof}
For $p$ with $\int_{\Omega_\delta}p(\bx)\mathd \bx=0$, we also have nonlocal Poinc\'{a}re inequality \cite{SS14},
\begin{align*}
    \|p\|_{L^2(\Omega_\delta)}^2\le \frac{C}{\delta^2}\int_{\Omega_\delta}\int_{\Omega_\delta}\bar{R}_{\delta}(\bx,\by)(p(\bx)-p(\by))^2\mathd \bx\mathd \by
\end{align*}
Using nondegeneracy assumption in Assumption \ref{assumption}, it is easy to verify that for any $\bx\in \Omega$,
\begin{equation}
  \int_{\Omega_\delta} \bar{R}_{4\delta}(\bx,\by)\mathd\by\ge c_0>0.\nonumber
\end{equation}
where 
$$\bar{R}_{4\delta}(\bx,\by)=C_\delta \bar{R}\left(\frac{\|\bx-\by\|^2}{4(4\delta)^2}\right),$$
and $C_\delta$ is the normalization factor in \eqref{eq:kernel}.
\begin{align}
 & \|p\|_{L^2(\Omega)}^2\nonumber\\
  \le& C\int_\Omega|p(\bx)|^2\left(\int_\Omega R_\delta(\bx,\by)\mathd\by\right)\mathd\bx\nonumber\\
\le&C\int_\Omega\left(\int_{\Omega_\delta}|p(\bx)|^2 \bar{R}_{4\delta}(\bx,\by)\mathd\by\right)\mathd\bx\nonumber\\
\le&C\int_\Omega\left(\int_{\Omega_\delta}|p(\bx)-p(\by)|^2 \bar{R}_{4\delta}(\bx,\by)\mathd\by\right)\mathd\bx\nonumber\\
&+
C\int_\Omega\left(\int_{\Omega_\delta}|p(\by)|^2 \bar{R}_{4\delta}(\bx,\by)\mathd\by\right)\mathd\bx\nonumber\\
\le&C\int_\Omega\int_{\Omega}|p(\bx)-p(\by)|^2 \bar{R}_{4\delta}(\bx,\by)\mathd\by\mathd\bx+
C\int_{\Omega_\delta}|p(\by)|^2\mathd\bx\nonumber\\
\le&C\int_\Omega\int_{\Omega}|p(\bx)-p(\by)|^2 \bar{R}_{\delta}(\bx,\by)\mathd\by\mathd\bx+
C\|p\|_{L^2(\Omega_\delta)}^2\nonumber
\end{align}
\end{proof}

Now we can prove the main theorem in this section, Theorem \ref{thm:wellpose}.
\begin{proof}[Proof of Theorem \ref{thm:wellpose}:]

First, in the nonlocal Stokes system, we replace the condition $$\int_\Omega p_\delta(\bx)\mathd \bx=0$$
by 
$$\int_{\Omega_\delta} p_\delta(\bx)\mathd \bx=0$$
and denote the 
pressure in the original nonlocal Stokes system as $\bar{p}_\delta$.
 It is obvious that 
\begin{align}
\label{eq:p-pbar}
\bar{p}_\delta=p_\delta-\frac{1}{|\Omega|}\int_\Omega p_\delta(\bx)\mathd\bx.
\end{align}
 { The existence and uniqueness of the solution to the nonlocal Stokes system is a direct implication of Lax-Milgram Theorem
by introducing the bilinear form in $\bm{L}_\delta^2(\Omega)\times \bm{L}_\delta^2(\Omega)$:
\begin{align}
\label{eq:bilinear}
a([\bfu,p],[\bm{v},q])=&\frac{1}{2\delta^2}\int_\Omega\int_\Omega R_\delta(\bx,\by)(\bfu(\bx)-\bfu(\by))\cdot(\bm{v}(\bx)-\bm{v}(\by))\mathd \bx\mathd \by\\
&+\frac{1}{2\delta^2}\int_\Omega\int_\Omega R_\delta(\bx,\by)(\by-\bx)\cdot( \bm{v}(\bx)p(\by)-\bfu(\bx)q(\by))\mathd \bx\mathd \by\nonumber\\
&+\int_\Omega\int_\Omega \bar{R}_\delta(\bx,\by)(p(\bx)-p(\by))(q(\bx)-q(\by))\mathd \bx\mathd \by\nonumber
\end{align}
where 
$$\bm{L}_\delta^2(\Omega)=\left\{[\bfu,p]:  \bfu\in L^2(\Omega)^n, p\in L^2(\Omega), \mbox{supp}(\bfu)\subset \Omega_\delta, \int_{\Omega_\delta}p(\bx)\mathd \bx=0\right\}.$$

To apply Lax-Milgram Theorem, we need to check the continuity and coercivity of the bilinear form, i.e. for any $[\bfu,p], [\bm{v},q]\in \bm{L}_\delta^2(\Omega)$,
\begin{align*} 
    |a([\bfu,p],[\bm{v},q])|\le C( \|\bfu\|_{L^2(\Omega)}
    +\|p\|_{L^2(\Omega)})( \|\bm{v}\|_{L^2(\Omega)}
    +\|q\|_{L^2(\Omega)})
\end{align*}
and
\begin{align*}
    a([\bfu,p],[\bfu,p])\ge C( \|\bfu\|_{L^2(\Omega)}^{ 2}
    +\|p\|_{L^2(\Omega)}^{2})
\end{align*}
with $C>0$ independent on $[\bfu,p]$ and $[\bm{v},q]$.\footnote{ Here constant $C$ may depend on $\delta$.}  

{The continuity is easy to check and coercivity can be given by Lemma \ref{cor:l2-inner}, Lemma \ref{cor:l2-inner-ave}}.
Then, the existence and uniqueness of the solution is given by Lax-Milgram Theorem {(Section 6.2.1 in \cite{Evans})}.

 In the rest of the proof, we will devote to get the uniform upper bound of $\|\bfu\|_{H^1(\Omega_\delta)}^{{ 2}}+\|p\|_{L^2(\Omega)}^{{2}}$.}
Multiplying $\bfu_\delta$ on the first equation of \eqref{eq:stokes-nonlocal} and multiplying $p$ on the second equation of \eqref{eq:stokes-nonlocal} and integrating over $\Omega$ and adding them together,
we can get
\begin{align}
\label{eq:u-0}
 & \frac{1}{\delta^2}\int_\Omega\int_\Omega R_\delta(\bx,\by)|\bfu_\delta(\bx)-\bfu_\delta(\by)|^2\mathd \bx\mathd \by+\int_\Omega\int_\Omega \bar{R}_\delta(\bx,\by)(p_\delta(\bx)-p_\delta(\by))^2\mathd \bx\mathd \by\\
&\qquad =-2{\int_{\Omega}\left(\int_{\Omega} \bar{R}_\delta(\bx,\by)\bff(\by)\mathd \by\right)\cdot\bfu_\delta(\bx)\mathd \bx}\nonumber
\end{align}

{
From \eqref{eq:u-0}, using Lemma \ref{cor:l2-inner}, we have
\begin{align}
\|\bm{u}_\delta\|_{L^2(\Omega)}^2\le C\int_{\Omega}\left(\int_{\Omega} \bar{R}_\delta(\bx,\by)\bff(\by)\mathd \by\right)\cdot\bfu_\delta(\bx)\mathd \bx.\nonumber
\end{align}

and
\begin{align}
\left|\int_{\Omega}\int_{\Omega} \bar{R}_\delta(\bx,\by)\bff(\by)\cdot\bfu_\delta(\bx)\mathd \bx\mathd \by\right|\le C \|\bff\|_{H^{-1}(\Omega)}\|\tilde{\bfu}_\delta\|_{H^1(\Omega)}.\nonumber
\end{align}
with
$$\tilde{\bfu}_\delta(\by)=\int_{\Omega} \bar{R}_\delta(\bx,\by)\bfu_\delta(\bx)\mathd \bx.$$
 Notice that
 $\bfu_\delta(\by)=0,\; \by\in \mathcal{V}_\delta$ and
 $\int_\Omega \nabla_{\by}\bar{R}_\delta(\bx,\by)\mathd \bx=0,\; \by\in \Omega_\delta,$
 so 
 $$\bfu_\delta(\by)\int_\Omega \nabla_{\by}\bar{R}_\delta(\bx,\by)\mathd \bx=0,\quad \by\in \Omega.$$
Then we have
 \begin{align*}
     \|\nabla \tilde{\bfu}_\delta\|_{L^2(\Omega)}^2=&\int_\Omega \left|\int_{\Omega} \nabla_{\by}\bar{R}_\delta(\bx,\by)\bfu_\delta(\bx)\mathd \bx\right|^2\mathd \by\\
     =&\int_\Omega \left|\int_{\Omega} \nabla_{\by}\bar{R}_\delta(\bx,\by)(\bfu_\delta(\bx)-\bfu_\delta(\by))\mathd \bx\right|^2\mathd \by\\
     \le&\int_\Omega \left(\int_{\Omega} |\nabla_{\by}\bar{R}_\delta(\bx,\by)|\mathd \bx\right) \int_{\Omega} |\nabla_{\by}\bar{R}_\delta(\bx,\by)||\bfu_\delta(\bx)-\bfu_\delta(\by)|^2\mathd \bx\mathd \by\\
     \le & \frac{C}{\delta^2}\int_\Omega \int_{\Omega} R_\delta(\bx,\by)|\bfu_\delta(\bx)-\bfu_\delta(\by)|^2\mathd \bx\mathd \by
 \end{align*}
 to get the last inequality, Lemma \ref{lem:tech} is used. 
 
 {
Moreover, it is easy to see that 
$$\|\tilde{\bfu}_\delta\|_{L^2(\Omega)}^2\le C\|{\bfu}_\delta\|_{L^2(\Omega)}^2\le \frac{C}{\delta^2}\int_\Omega \int_{\Omega} R_\delta(\bx,\by)|\bfu_\delta(\bx)-\bfu_\delta(\bx)|^2\mathd \bx\mathd \by.$$
Putting above estimates together, we have
\begin{align*}
   \|\tilde{\bfu}_\delta\|_{H^1(\Omega)}^2\le&  \frac{C}{\delta^2}\int_\Omega\int_\Omega R_\delta(\bx,\by)|\bfu_\delta(\bx)-\bfu_\delta(\by)|^2\mathd \bx\mathd \by\\
   \le& C \left|\int_{\Omega}\int_{\Omega} \bar{R}_\delta(\bx,\by)\bff(\by)\cdot\bfu_\delta(\bx)\mathd \bx\mathd \by\right|
\end{align*}
It follows that
\begin{align*}
 \left|\int_{\Omega}\int_{\Omega} \bar{R}_\delta(\bx,\by)\bff(\by)\cdot\bfu_\delta(\bx)\mathd \bx\mathd \by\right|\le  C  \|\bff\|^2_{H^{-1}(\Omega)}
\end{align*}
}

Hence, we get
\begin{align}
  \label{eq:u-l2}
\|\bm{u}_\delta\|_{L^2(\Omega)}\le C\|\bff\|_{H^{-1}(\Omega)}.
\end{align}
}
In addition, from the first equation of \eqref{eq:stokes-nonlocal}, $\bfu_\delta$ has following expression, for any $\bx\in \Omega_\delta$,
\begin{align}
 \label{eq:u-h1-0}
  \bfu_\delta(\bx)=&\frac{1}{w_\delta(\bx)}\int_{\Omega} R_\delta(\bx,\by)\bfu_\delta(\by)\mathd \by+\frac{1}{2w_\delta(\bx)}\int_\Omega R_\delta(\bx,\by)(\bx-\by)p_\delta(\by)\mathd\by
  \nonumber\\
  &
-\frac{\delta^2}{w_\delta(\bx)}
{ \int_{\Omega}} \bar{R}_\delta(\bx,\by)\bff(\by)\mathd\by
\end{align}
Using Lemma \ref{cor:coercivity-inner}, \eqref{eq:u-0} and \eqref{eq:u-l2}, we have
\begin{align}
  \label{eq:u-h1-1}
&  \|\nabla \left(\frac{1}{w_\delta(\bx)}\int_{\Omega} R_\delta(\bx,\by)\bfu_\delta(\by)\mathd \by\right)\|_{L^2(\Omega_\delta)}^2\\
\le&\frac{C}{\delta^2}\int_{\Omega}\int_{\Omega}R_\delta(\bx,\by)|\bfu_\delta(\bx)-\bfu_\delta(\by)|^2\mathd\bx\mathd\by\nonumber\\
\le& C
{ \|\bff\|_{H^{-1}(\Omega)}^2}\nonumber
\end{align}

Notice that for any $\bx\in \Omega_\delta$, $w_\delta(\bx)$ is a positive constant. Then we have
\begin{align}
 \label{eq:u-h1-2}
  &\|\nabla\left(\frac{1}{2w_\delta(\bx)}\int_\Omega R_\delta(\bx,\by)(\bx-\by)p_\delta(\by)\mathd\by\right)\|_{L^2(\Omega_\delta)}^2\\
\le& C\int_{\Omega_\delta}\left|\int_{\Omega}\nabla_{\bx} R_\delta(\bx,\by)(\bx-\by)p_\delta(\by)\mathd\by\right|^2\mathd\bx+
C\int_{\Omega_\delta}\left(\int_{\Omega} R_\delta(\bx,\by)p_\delta(\by)\mathd\by\right)^2\mathd\bx\nonumber\\
\le& \frac{C}{\delta^2}\int_{\Omega}\left|\int_{\Omega}|R'_\delta(\bx,\by)||\bx-\by|^2 |p_\delta(\by)|\mathd\by\right|^2\mathd\bx+
C\int_{\Omega}\left(\int_{\Omega} R_\delta(\bx,\by)p_\delta(\by)\mathd\by\right)^2\mathd\bx\nonumber\\
\le& C\int_{\Omega}\left(\int_{\Omega}|R'_\delta(\bx,\by)| |p_\delta(\by)|\mathd\by\right)^2\mathd\bx+C\int_{\Omega}\left(\int_{\Omega} R_\delta(\bx,\by)p_\delta(\by)\mathd\by\right)^2\mathd\bx
\nonumber\\
\le& C\|{ p_\delta}\|_{L^2(\Omega)}^2.\nonumber
\end{align}
where
$$R'_\delta(\bx,\by)=C_\delta R'\left(\frac{|\bx-\by|^2}{4\delta^2}\right),\quad R'(r)=\frac{\mathd }{\mathd r}R(r).$$
In addition, direct calculation
gives that
\begin{align}
  \label{eq:u-h1-3}
  \|\nabla\left(\frac{\delta^2}{w_\delta(\bx)}
  { \int_{\Omega}} \bar{R}_\delta(\bx,\by)\bff(\by)\mathd\by\right)\|_{L^2(\Omega_\delta)}\le C {\|\bff\|_{H^{-1}(\Omega)}}
\end{align}
{
For any $v\in L^2(\Omega_\delta)$,
\begin{align*}
    &\int_{\Omega_\delta} v(\bx)\nabla_{\bx}\left(\frac{\delta^2}{w_\delta(\bx)}\int_{\Omega} \bar{R}_\delta(\bx,\by)\bff(\by)\mathd\by\right)\mathd \bx\\
    =&\int_{\Omega} \left(\int_{\Omega_\delta} v(\bx)\nabla_{\bx}\left(\frac{\delta^2}{w_\delta(\bx)} \bar{R}_\delta(\bx,\by)\right)\mathd \bx\right) \bff(\by)\mathd\by\\
    \le & \|\bff\|_{H^{-1}(\Omega)} \left\|\tilde{v}\right\|_{H^{1}(\Omega_\delta)}
\end{align*}
where 
$$\tilde{v}(\by)=\int_{\Omega_\delta} v(\bx)\nabla_{\bx}\left(\frac{\delta^2}{w_\delta(\bx)} \bar{R}_\delta(\bx,\by)\right)\mathd \bx=\int_{\Omega_\delta} v(\bx)\frac{\delta^2}{w_\delta(\bx)} \nabla_{\bx}\bar{R}_\delta(\bx,\by)\mathd \bx.$$
Here we use the fact that $\omega_\delta(\bx)$ is a constant over $\Omega_\delta$.

Using Lemma \ref{lem:tech}, it is easy to check that 
$$\left\|\tilde{v}\right\|_{H^{1}(\Omega_\delta)}\le C\|v\|_{L^2(\Omega_\delta)}.$$
Then \eqref{eq:u-h1-3} is obtained. 
}

Putting \eqref{eq:u-l2}-\eqref{eq:u-h1-1}-\eqref{eq:u-h1-2}-\eqref{eq:u-h1-3} together, we obtain
\begin{align}
  \label{eq:u-h1}
  \|\bfu_\delta\|_{H^1(\Omega_\delta)}\le C{\|\bff\|_{H^{-1}(\Omega)}}+C\|p_\delta\|_{L^2(\Omega)}
\end{align}

Next, we turn to estimate the pressure $p$.
First, considering the problem
\begin{align}
\label{eq:divergence}
  \nabla \cdot \bm{v}(\bx)=p_\delta(\bx),\quad \bx\in \Omega_\delta.
\end{align}
It is well known (e.g. Section 3.3 of \cite{NS-book}) that if $\Omega_\delta$ satisfies cone condition,
there exists at least one solution of \eqref{eq:divergence}, denoted by $\bm{v}$, such that
\begin{align}
\label{eq:bound-v}
  \bm{v}\in H_0^{1}(\Omega_\delta),\quad \|\bm{v}\|_{H^{1}(\Omega_\delta)}\le c\|p_\delta\|_{L^2(\Omega_\delta)}
\end{align}
with $c>0$ independent on $\delta$. Proof of \eqref{eq:bound-v} can be found in Appendix B. 

Then, we extend $\bm{v}$ to $\Omega$ by assigning the value on $\mathcal{V}_\delta$ to be 0 and denote the new function also by $\bm{v}$. Obviously, we have
\begin{align}
\label{eq:bound-v-ext}
  \bm{v}\in H_0^{1}(\Omega_\delta)\cap H_0^{1}(\Omega) ,\quad \|\bm{v}\|_{H^{1}(\Omega)}\le c\|p_\delta\|_{L^2(\Omega_\delta)}.
\end{align}

On the other hand, using the second equation of \eqref{eq:stokes-nonlocal}, $\forall \bx\in \Omega$
\begin{align}
  \label{eq:p-error-0-wp}
\bar{w}_\delta(\bx)p_\delta(\bx)&=\int_\Omega \bar{R}_\delta(\bx,\by)p_\delta(\by)\mathd\by+\frac{1}{2\delta^2}\int_\Omega R_\delta(\bx,\by)(\bx-\by)\cdot \bm{u}_\delta(\by)\mathd \by\nonumber\\
=&\int_{\Omega_\delta} \bar{R}_\delta(\bx,\by)\nabla\cdot \bm{v}(\by)\mathd\by+\frac{1}{2\delta^2}\int_{\Omega_\delta} R_\delta(\bx,\by)(\bx-\by)\cdot \bfu_\delta(\by)\mathd \by\nonumber\\
&+
\frac{1}{2\delta^2}\int_{\V_\delta} R_\delta(\bx,\by)(\bx-\by)\cdot \bfu_\delta(\by)\mathd \by
+\int_{\mathcal{V}_\delta} \bar{R}_\delta(\bx,\by)p_\delta(\by)\mathd\by\nonumber\\
=&-\frac{1}{2\delta^2}\int_{\Omega_\delta} R_\delta(\bx,\by)(\bx-\by)\cdot \bar{\bm{v}}(\by)\mathd\by
+\int_{\mathcal{V}_\delta} \bar{R}_\delta(\bx,\by)p_\delta(\by)\mathd\by
\end{align}
where $\bar{w}_\delta(\bx)=\int_\Omega \bar{R}_\delta(\bx,\by)\mathd\by$ and $\bar{\bm{v}}=\bm{v}-\bfu_\delta$.

Then, it follows that
\begin{align}
\label{eq:p-error-1-0-wp}
&\frac{1}{2\delta^2}\int_{\Omega_\delta}\bar{\bm{v}}(\bx)\left(\int_\Omega R_\delta(\bx,\by)(\bx-\by)p_\delta(\by)\mathd \by\right)\mathd \bx\nonumber\\
=&-\frac{1}{2\delta^2}\int_{\Omega}p_\delta(\bx)\left(\int_{\Omega_\delta} R_\delta(\bx,\by)(\bx-\by)\bar{\bm{v}}(\by)\mathd \by\right)\mathd \bx\nonumber\\
=&\int_{\Omega}p^2_\delta(\bx)\bar{w}_\delta(\bx)\mathd\bx-\int_{\Omega}p_\delta(\bx) \left(\int_{\mathcal{V}_\delta}\bar{R}_\delta(\bx,\by)p_\delta(\by)\mathd\by\right)\mathd\bx.
\end{align}
The first term is positive, thus a good term. The second term becomes
\begin{align}
\label{eq:p-error-1-1-wp}
&  -\int_{\Omega}p_\delta(\bx) \left(\int_{\mathcal{V}_\delta}\bar{R}_\delta(\bx,\by)p_\delta(\by)\mathd\by\right)\mathd\bx\\
=&\int_{\Omega}p_\delta(\bx) \left(\int_{\mathcal{V}_\delta}\bar{R}_\delta(\bx,\by)(p_\delta(\bx)-p_\delta(\by))\mathd\by\right)\mathd\bx
-\int_{\Omega}p^2_\delta(\bx) \left(\int_{\mathcal{V}_\delta}\bar{R}_\delta(\bx,\by)\mathd\by\right)\mathd\bx.\nonumber
\end{align}
The second term of \eqref{eq:p-error-1-1-wp} can be controlled by the first term of \eqref{eq:p-error-1-0-wp}. And the first term is bounded by
\begin{align}
\label{eq:p-error-1-2-wp}
 & \int_{\Omega}p_\delta(\bx) \left(\int_{\mathcal{V}_\delta}\bar{R}_\delta(\bx,\by)(p_\delta(\bx)-p_\delta(\by))\mathd\by\right)\mathd\bx\\
=&\frac{1}{2}\int_{\mathcal{V}_\delta}\int_{\mathcal{V}_\delta}\bar{R}_\delta(\bx,\by)(p_\delta(\bx)-p_\delta(\by))^2\mathd\by\mathd\bx+
 \int_{\Omega_\delta}p_\delta(\bx) \left(\int_{\mathcal{V}_\delta}\bar{R}_\delta(\bx,\by)(p_\delta(\bx)-p_\delta(\by))\mathd\by\right)\mathd\bx\nonumber\\
\ge &\frac{1}{2}\int_{\mathcal{V}_\delta}\int_{\mathcal{V}_\delta}\bar{R}_\delta(\bx,\by)(p_\delta(\bx)-p_\delta(\by))^2\mathd\by\mathd\bx
+\int_{\Omega_\delta} \left(\int_{\mathcal{V}_\delta}\bar{R}_\delta(\bx,\by)(p_\delta(\bx)-p_\delta(\by))^2\mathd\by\right)\mathd\bx\nonumber\\
&-\left|\int_{\Omega_\delta} \left(\int_{\mathcal{V}_\delta}\bar{R}_\delta(\bx,\by)(p_\delta(\bx)-p_\delta(\by))p_\delta(\by)\mathd\by\right)\mathd\bx\right|\nonumber\\
\ge&\frac{1}{2}\int_{\Omega} \left(\int_{\mathcal{V}_\delta}\bar{R}_\delta(\bx,\by)(p_\delta(\bx)-p_\delta(\by))^2\mathd\by\right)\mathd\bx
-\frac{1}{2}\int_{\mathcal{V}_\delta} p^2_\delta(\bx)\left(\int_{\Omega_\delta}\bar{R}_\delta(\bx,\by)\mathd\by\right)\mathd\bx.\nonumber
\end{align}

Combining \eqref{eq:p-error-1-0-wp}-\eqref{eq:p-error-1-2-wp}, we get
\begin{align}
\label{eq:p-4}
&\frac{1}{2\delta^2}\int_{\Omega_\delta}\bar{\bm{v}}(\bx)\left(\int_\Omega R_\delta(\bx,\by)(\bx-\by)p_\delta(\by)\mathd \by\right)\mathd \bx
\ge&\int_{\Omega_\delta}p^2_\delta(\bx)\left(\int_{\Omega_\delta}\bar{R}_\delta(\bx,\by)\mathd\by\right)\mathd\bx
\end{align}

Now, we are ready to get the estimate of $p_\delta$. 
Multiplying $\bar{\bm{v}}$ on both sides of the first equation of \eqref{eq:stokes-nonlocal} and integrating over $\Omega_\delta$, using the fact that
$\bar{\bm{v}}(\bx)=0,\;\;\bx\in \mathcal{V}_\delta$, we have
\begin{align}
\label{eq:p-0}
   -&\frac{1}{2\delta^2}\int_\Omega\int_\Omega R_\delta(\bx,\by)(\bfu_\delta(\bx)-\bfu_\delta(\by))\cdot (\bar{\bm{v}}(\bx)-\bar{\bm{v}}(\by))\mathd \bx\mathd \by\\
+&\frac{1}{2\delta^2}\int_{\Omega_\delta}\bar{\bm{v}}(\bx)\left(\int_\Omega R_\delta(\bx,\by)(\bx-\by)p_\delta(\by)\mathd \by\right)\mathd \bx=\int_{\Omega_\delta}\bar{\bm{v}}(\bx)\left(\int_{\Omega_\delta} \bar{R}_\delta(\bx,\by)\bff(\by)\mathd \by\right)\mathd \bx.\nonumber
\end{align}

Using \eqref{eq:u-0}, \eqref{eq:u-h1}, \eqref{eq:bound-v-ext}, \eqref{eq:p-0} and \eqref{eq:p-4}, we have
\begin{align}
&\frac{1}{2}\|p\|_{L^2(\Omega_\delta)}^2\nonumber\\
\le&\left( \frac{1}{2\delta^2}\int_\Omega\int_\Omega R_\delta(\bx,\by)|\bfu_\delta(\bx)-\bfu_\delta(\by)|^2\mathd \bx\mathd \by\right)^{1/2}
\left(\frac{1}{2\delta^2}\int_\Omega\int_\Omega R_\delta(\bx,\by)|\bar{\bm{v}}(\bx)-\bar{\bm{v}}(\by)|^2\mathd \bx\mathd \by\right)^{1/2}\nonumber\\
&
+\|\bar{\bm{v}}\|_{H^1(\Omega_\delta)}\|\bm{f}\|_{H^{-1}(\Omega)}\nonumber\\
\le & \left( \frac{1}{2\delta^2}\int_\Omega\int_\Omega R_\delta(\bx,\by)|\bfu_\delta(\bx)-\bfu_\delta(\by)|^2\mathd \bx\mathd \by\right)^{1/2}
\left(\left(\frac{1}{2\delta^2}\int_\Omega\int_\Omega R_\delta(\bx,\by)|\bm{v}(\bx)-\bm{v}(\by)|^2\mathd \bx\mathd \by\right)^{1/2}\right.\nonumber\\
&\left.+
\left(\frac{1}{2\delta^2}\int_\Omega\int_\Omega R_\delta(\bx,\by)|\bm{u}_\delta(\bx)-\bm{u}_\delta(\by)|^2\mathd \bx\mathd \by\right)^{1/2}\right)
+(\|\bm{v}\|_{H^1(\Omega_\delta)}+\|\bm{u}_\delta\|_{H^1(\Omega_\delta)})\|\bm{f}\|_{H^{-1}(\Omega)} \nonumber\\
\le &\|\bm{u}_\delta\|_{H^1(\Omega_\delta)}\|\bm{f}\|_{H^{-1}(\Omega)}+\|\bm{u}_\delta\|_{H^1(\Omega_\delta)}^{1/2}\|\bm{f}\|_{H^{-1}(\Omega)}^{1/2}\|\bm{v}\|_{H^1(\Omega_\delta)}+C(\|p_\delta\|_{L^2(\Omega_\delta)}
+\|\bff\|_{H^{-1}(\Omega)})\|\bm{f}\|_{H^{-1}(\Omega)} \nonumber\\
\le& C(\|p_\delta\|_{L^2(\Omega_\delta)}
+\|\bff\|_{H^{-1}(\Omega)})\|\bm{f}\|_{H^{-1}(\Omega)}.\nonumber\\
&
\label{eq:p-inner}
\end{align}

Using \eqref{eq:u-0}, \eqref{eq:u-h1}, \eqref{eq:p-inner} and Lemma \ref{cor:l2-inner-ave}, we have
\begin{align}
  \|p_\delta\|_{L^2(\Omega)}^2\le& C\|\bfu_\delta\|_{L^2(\Omega)}\|\bff\|_{H^{-1}(\Omega)}+
C\|p_\delta\|_{L^2(\Omega_\delta)}^2\nonumber\\
\le& C(\|p_\delta\|_{L^2(\Omega)}
+\|\bff\|_{H^{-1}(\Omega)})\|\bm{f}\|_{H^{-1}(\Omega)}\nonumber
\end{align}
which implies 
\begin{align}
\label{eq:p-global}
\|p_\delta\|_{L^2(\Omega)}\le C\|\bm{f}\|_{H^{-1}(\Omega)}.
\end{align}
This also gives the $H^1$ estimate of $\bfu_\delta$ using \eqref{eq:u-h1},
\begin{align}
\label{eq:u-h1-final}
\|\bfu_\delta\|_{H^1(\Omega_\delta)}\le C\|\bm{f}\|_{H^{-1}(\Omega)}.
\end{align}
and using \eqref{eq:p-pbar},
\begin{align}
  \label{eq:p-global-final}
\|\bar{p}_\delta\|_{L^2(\Omega)}\le \|p_\delta\|_{L^2(\Omega)}+\frac{1}{|\Omega|}\left|\int_\Omega p_\delta(\bx)\mathd\bx\right|\le  C\|\bm{f}\|_{H^{-1}(\Omega)}.
\end{align}
Note that in the above,  the fact that 
\begin{align*}
  \frac{1}{|\Omega|}\left|\int_\Omega p_\delta(\bx)\mathd\bx\right|\le \frac{1}{\sqrt{|\Omega|}}\|p_\delta\|_{L^2(\Omega)},
\end{align*}
is used.
\end{proof}



\section{Vanishing nonlocality}
\label{sec:vanish}

Besides the well-posedness,  we are  also interested in the limiting behavior of the nonlocal Stokes system \eqref{eq:stokes-nonlocal} as the nonlocality vanishes, 
i.e. $\delta\rightarrow 0$.
In this section, under some assumptions, we prove that solutions of the nonlocal Stokes system converge to the solution of the
Stokes system as $\delta\rightarrow 0$. Furthermore, we give an estimate on the convergence rate. The result is summarized in Theorem \ref{thm:converge-integral}.

Before stating the main theorem, we give several technical results that are used to prove the main theorem.


We also need the following theorem on the order of the nonlocal approximation
{which can be proved via simple Taylor expansion.}
\begin{theorem}
Let 
 \begin{eqnarray*}
   r(\bx)=-\frac{1}{\delta^2}\int_{\M}R_\delta(\bx,\by)(u(\bx)-u(\by))\mathd\by-\int_{\M}\bar{R}_\delta(\bx,\by)\Delta u(\by)\mathd\by,\quad \forall \bx\in \Omega_\delta.
 \end{eqnarray*}
There exist constants
$C, T_0$ depending only on $\M$, so that for any $\delta\le T_0$,
 {for $u\in H^3(\M)$,}
\begin{eqnarray}
\label{eq:integral_error_l2}
\left\|r(\bx)\right\|_{L^2(\M_\delta)}&\le& C{\delta\|u\|_{H^3(\Omega)}},\\
\label{eq:integral_error_h1}
\left\|\nabla r(\bx)\right\|_{L^2(\M_\delta)}
&\leq& C \|u\|_{H^3(\Omega)}.
\end{eqnarray}
\label{thm:integral_error}
\end{theorem}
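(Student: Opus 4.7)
The strategy is to decompose
$r = L_\delta u - M_\delta(\Delta u)$,
where $L_\delta u(\bx)=\delta^{-2}\int_\Omega R_\delta(\bx,\by)(u(\by)-u(\bx))\mathd\by$ is the nonlocal Laplacian and $M_\delta f(\bx)=\int_\Omega \bar{R}_\delta(\bx,\by)f(\by)\mathd\by$ is the mollification, and to control each piece separately using two structural facts: (a) for $\bx\in \Omega_\delta$, the ball $B(\bx,2\delta)\subset \Omega$, so the kernels' supports are interior and all integration-by-parts boundary terms in $\by$ vanish; (b) by radial symmetry of $R_\delta(\bx,\cdot)$ about $\bx$, odd-order polynomials in $\by-\bx$ integrate to zero, and the normalization $\int \bar{R}_\delta\mathd\by=1$ together with the fundamental relation between $\bar R$ and $R$ forces $(2n\delta^2)^{-1}\int R_\delta(\bx,\by)|\by-\bx|^2\mathd\by=1$, so that the quadratic Taylor term of $u$ inside $L_\delta u$ collapses exactly to $\Delta u(\bx)$.

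For \eqref{eq:integral_error_l2}, I would Taylor expand $u(\by)$ about $\bx$ to third order with integral remainder in $D^3 u$. The linear term cancels by radial symmetry and the quadratic term reproduces $\Delta u(\bx)$, leaving a pointwise bound $|L_\delta u(\bx)-\Delta u(\bx)|\le C\delta\int R_\delta(\bx,\by)\int_0^1|D^3u(\bx+t(\by-\bx))|\mathd t\,\mathd \by$. Applying Cauchy--Schwarz in $\by$ and then Fubini in $(\bx,\by,t)$ with the change of variables $\bz=\bx+t(\by-\bx)$ for each fixed $t$ gives $\|L_\delta u-\Delta u\|_{L^2(\Omega_\delta)}\le C\delta\|u\|_{H^3(\Omega)}$. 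Writing $\Delta u(\by)-\Delta u(\bx)=\int_0^1\nabla\Delta u(\bx+t(\by-\bx))\cdot(\by-\bx)\mathd t$ and repeating the same argument with the unit-mass kernel $\bar R_\delta$ yields $\|M_\delta\Delta u-\Delta u\|_{L^2(\Omega_\delta)}\le C\delta\|u\|_{H^3(\Omega)}$, and the triangle inequality closes \eqref{eq:integral_error_l2}.

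For \eqref{eq:integral_error_h1} the key observation is that on $\Omega_\delta$, $\nabla$ commutes with both $L_\delta$ and $M_\delta$: the function $\bx\mapsto \int_\Omega R_\delta(\bx,\by)\mathd\by$ is constant on $\Omega_\delta$ (equal to its full-space value), and $\nabla_\bx R_\delta=-\nabla_\by R_\delta$, so integration by parts in $\by$ (boundary terms vanish by (a)) gives $\nabla L_\delta u=L_\delta(\nabla u)$ and similarly $\nabla M_\delta(\Delta u)=M_\delta(\nabla\Delta u)$. Hence $\nabla r=L_\delta(\nabla u)-M_\delta(\nabla\Delta u)$, and I bound each term separately in $L^2(\Omega_\delta)$: Young's convolution inequality gives $\|M_\delta(\nabla\Delta u)\|_{L^2}\le \|\nabla\Delta u\|_{L^2}\le\|u\|_{H^3}$, while for $L_\delta(\nabla u)$ I expand $\nabla u(\by)-\nabla u(\bx)=D^2u(\bx)(\by-\bx)+R_2$, kill the linear part by symmetry, and bound the second-order remainder pointwise by $|\by-\bx|^2\int_0^1|D^3u(\bx+t(\by-\bx))|\mathd t$ to obtain $\|L_\delta(\nabla u)\|_{L^2(\Omega_\delta)}\le C\|D^3u\|_{L^2(\Omega)}$.

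The main obstacle is precisely this last bound: having already spent the one factor of $\delta$ on differentiation, no small parameter remains, so the argument must extract a bounded (non-vanishing) constant at exactly the $H^3$ regularity required. This forces the radial-symmetry cancellation of the $D^2 u$ term to be performed in full and the Fubini change-of-variables manipulation to be carried out carefully; any loss of one derivative anywhere would overshoot the $H^3$ hypothesis.
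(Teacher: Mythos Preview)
Your proposal is correct and follows essentially the same route the paper indicates: the paper does not actually write out a proof of this theorem, it only remarks that it ``can be proved via simple Taylor expansion,'' and the formal derivation in Appendix~\ref{sec:appendix1} (integration by parts to pass from $\bar{R}_\delta\Delta u$ to $R_\delta(\by-\bx)\cdot\nabla u$, then Taylor expansion) is the same mechanism you are using, just organized slightly differently. Your decomposition $r=L_\delta u-M_\delta(\Delta u)$, the symmetry cancellation of odd moments, the identity $(2n\delta^2)^{-1}\int R_\delta|\by-\bx|^2\mathd\by=1$ (which is exactly the normalization \eqref{eq:normal-barR} after the integration-by-parts relation between $R$ and $\bar R$), and the commutation $\nabla L_\delta u=L_\delta(\nabla u)$, $\nabla M_\delta f=M_\delta(\nabla f)$ on $\Omega_\delta$ are all correct and constitute the standard execution of that Taylor-expansion argument. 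The Fubini/change-of-variables step to pass from the pointwise integral-remainder bound to the $L^2$ bound in terms of $\|D^3u\|_{L^2(\Omega)}$ is the right way to handle the $H^3$ (rather than $C^3$) hypothesis; to be fully rigorous you should first argue for smooth $u$ and then pass to $H^3$ by density, but this is routine.
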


We then have the main result of this section regarding the convergence of the nonlocal Stokes system as the nonlocality vanishes.
\begin{theorem}
  \label{thm:converge-integral}
Let $\bfu(\bx)$, $p(\bx)$ be solution of Stokes system \eqref{eq:stokes} and $\bfu_\delta(\bx),\; p_\delta(\bx)$ be solution of nonlocal Stokes system \eqref{eq:stokes-nonlocal}
with $\bff\in H^1(\Omega)$.
There exists a constant $C>0$ that only depends on $\M$ and $R$, such that
\begin{eqnarray*}
  \|\bfu-\bfu_\delta\|_{H^1(\M_\delta)}+\|p-p_\delta\|_{L^2(\Omega)}\le C\sqrt{\delta}\|f\|_{H^1(\M)}
\end{eqnarray*}
\end{theorem}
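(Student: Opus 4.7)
\emph{Proof proposal.}
The plan is to combine the consistency estimate of Theorem~\ref{thm:integral_error} with the well-posedness bound of Theorem~\ref{thm:wellpose} applied to the error system, after resolving the mismatch between $\bfu$ and the nonlocal volume constraint. I first invoke classical regularity for the Stokes system: since $\partial\Omega\in C^2$ and $\bff\in H^1(\Omega)$, we have $\bfu\in H^3(\Omega)$ and $p\in H^2(\Omega)$ with norms controlled by $\|\bff\|_{H^1(\Omega)}$. This furnishes exactly the smoothness needed to trigger the $L^2$ consistency rate of Theorem~\ref{thm:integral_error}.

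I then substitute $(\bfu,p)$ into the nonlocal operators. By Theorem~\ref{thm:integral_error} applied componentwise,
\begin{equation*}
\mathcal{L}_\delta\bfu(\bx)=\int_\Omega\bar{R}_\delta(\bx,\by)\Delta\bfu(\by)\,\mathd\by+\bm{r}_1(\bx),\quad \|\bm{r}_1\|_{L^2(\Omega_\delta)}\le C\delta\|\bfu\|_{H^3(\Omega)}.
\end{equation*}
An analogous Taylor-expansion argument, exploiting the moment identity $\frac{1}{2\delta^2}\int R_\delta(\bx,\by)(\by-\bx)\otimes(\by-\bx)\,\mathd\by = I$ (which holds in $\Omega_\delta$ since $B(\bx,2\delta)\subset\Omega$), yields $\mathcal{G}_\delta p=\int_\Omega\bar{R}_\delta\nabla p\,\mathd\by+\bm{r}_2$ and $\mathcal{D}_\delta\bfu=\int_\Omega\bar{R}_\delta(\nabla\!\cdot\!\bfu)\,\mathd\by+r_3$ with both residuals $O(\delta)$ in $L^2(\Omega_\delta)$. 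Using $\Delta\bfu-\nabla p=\bff$ and $\nabla\!\cdot\!\bfu=0$, the pair $(\bfu,p)$ satisfies the first nonlocal equation with residual $\bm{r}_1-\bm{r}_2=O(\delta)$ in $L^2(\Omega_\delta)$ and the second with residual $r_3-\bar{\mathcal{L}}_\delta p$, where $\|\bar{\mathcal{L}}_\delta p\|_{L^2(\Omega)}=O(\delta^2)$ by the scaling built into $\bar{\mathcal{L}}_\delta$.

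The main obstacle is that $\bfu$ does not satisfy $\bfu=0$ on $\V_\delta$, so Theorem~\ref{thm:wellpose} cannot be directly applied to $\bfu-\bfu_\delta$. My strategy is to introduce an auxiliary field $\tilde{\bfu}:=\eta_\delta\bfu$, where $\eta_\delta$ is a smooth cut-off based on the signed distance to $\partial\Omega$, with $\eta_\delta\equiv 1$ outside a $3\delta$-neighborhood of $\partial\Omega$ and $\eta_\delta\equiv 0$ on $\V_\delta$. Because $\bfu|_{\partial\Omega}=0$, $\bfu\in C^1(\bar\Omega)$, and $|\V_\delta|\le C\delta$, the standard strip estimates
\begin{equation*}
\|\tilde{\bfu}-\bfu\|_{L^2(\Omega)}\le C\delta^{3/2}\|\bff\|_{H^1},\qquad \|\nabla(\tilde{\bfu}-\bfu)\|_{L^2(\Omega)}\le C\delta^{1/2}\|\bff\|_{H^1}
\end{equation*}
follow. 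Defining $\bm{e}_u:=\tilde{\bfu}-\bfu_\delta$ and $e_p:=p-p_\delta-c_\delta$ with $c_\delta$ the constant enforcing mean zero on $\Omega_\delta$, the pair $(\bm{e}_u,e_p)$ respects the volume constraint and solves a nonlocal Stokes-type system whose right-hand side equals the $O(\delta)$ consistency residuals plus the correction terms $\mathcal{L}_\delta(\tilde{\bfu}-\bfu)$ and $\mathcal{D}_\delta(\tilde{\bfu}-\bfu)$, both bounded in the natural norm by the $O(\sqrt{\delta})$ boundary-layer $H^1$ estimate above.

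Finally, I rerun the energy argument from the proof of Theorem~\ref{thm:wellpose} for this error system, testing against $(\bm{e}_u,e_p)$ and invoking Lemma~\ref{cor:l2-inner} and Lemma~\ref{cor:l2-inner-ave}, noting that the bound extends from right-hand sides of the form $\int\bar{R}_\delta\bff$ to arbitrary $L^2$ data. This produces $\|\bm{e}_u\|_{H^1(\Omega_\delta)}+\|e_p\|_{L^2(\Omega)}\le C\sqrt{\delta}\,\|\bff\|_{H^1}$, the boundary-layer correction being the dominant contribution. A triangle inequality using $\|\bfu-\tilde{\bfu}\|_{H^1(\Omega_\delta)}=O(\sqrt{\delta})$ and the bound $|c_\delta|\le C\delta^{3/2}$ coming from the thin-strip mean-zero mismatch then gives the claimed rate. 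The decisive step --- and the ultimate source of $\sqrt{\delta}$ rather than $\delta$ --- is this boundary-layer bookkeeping: reconciling $\bfu$ with the nonlocal volume constraint on a strip of width $\delta$ costs $\sqrt{\delta}$ in $H^1$, while every interior consistency contribution is of the better order $\delta$.
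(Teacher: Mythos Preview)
Your cut-off strategy --- replace $\bfu$ by $\tilde\bfu=\eta_\delta\bfu$ so the error vanishes on $\V_\delta$ and push the boundary layer into the right-hand side --- is a legitimate alternative to the paper's route. The paper instead keeps $\be_\delta=\bfu-\bfu_\delta$, so that $\be_\delta=\bfu$ on $\V_\delta$, and controls the resulting boundary terms directly via the thin-strip estimate $\int_{\V_\delta}|\bfu|^2\le C\delta^3\|\bff\|_{L^2}^2$ (Lemma~\ref{lem:u-boundary}); your version encodes the same estimate into the bounds $\|\tilde\bfu-\bfu\|_{L^2}=O(\delta^{3/2})$ and $\|\nabla(\tilde\bfu-\bfu)\|_{L^2}=O(\sqrt\delta)$. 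Either bookkeeping can be made to work, and both identify the boundary layer as the source of the $\sqrt\delta$ rate.

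That said, there are two genuine gaps in your execution. First, the correction $\mathcal{L}_\delta(\tilde\bfu-\bfu)$ is \emph{not} $O(\sqrt\delta)$ in $L^2$: the operator carries a factor $\delta^{-2}$, and from $\|\tilde\bfu-\bfu\|_{H^1}=O(\sqrt\delta)$ you only get $\|\mathcal{L}_\delta(\tilde\bfu-\bfu)\|_{L^2}=O(\delta^{-1/2})$. So ``the bound extends to arbitrary $L^2$ data'' is false as stated; you must keep this term in bilinear form, bounding $\langle\mathcal{L}_\delta(\tilde\bfu-\bfu),\bm w\rangle$ by the product of the two nonlocal energies for each test function $\bm w$ that appears.

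Second, and more seriously, ``testing against $(\bm{e}_u,e_p)$ and invoking Lemma~\ref{cor:l2-inner} and Lemma~\ref{cor:l2-inner-ave}'' does not give a $\delta$-uniform bound on $\|e_p\|_{L^2(\Omega)}$. Lemma~\ref{cor:l2-inner-ave} says $\|e_p\|_{L^2}^2\le C\delta^{-2}\int\!\!\int\bar R_\delta(e_p(\bx)-e_p(\by))^2$, and the energy identity only controls the double integral by $O(\sqrt\delta)(\|\bm e_u\|_{L^2}+\|e_p\|_{L^2})$; the $\delta^{-2}$ factor destroys the rate. Both the paper's well-posedness proof and its convergence proof obtain the pressure bound by a separate inf-sup step: one solves $\nabla\!\cdot\bm\psi=d_\delta$ in $\Omega_\delta$ with $\|\bm\psi\|_{H^1}\le c\|d_\delta\|_{L^2(\Omega_\delta)}$ and $c$ \emph{independent of $\delta$} (this is the content of Appendix~\ref{sec:appendix4}), tests the first nonlocal equation against $\bm\psi-\be_\delta$, and uses the identity $\int_{\Omega_\delta}\bar R_\delta(\bx,\by)\nabla\!\cdot\bm\psi(\by)\,\mathd\by=-\frac{1}{2\delta^2}\int_{\Omega_\delta}R_\delta(\bx,\by)(\bx-\by)\cdot\bm\psi(\by)\,\mathd\by$ to turn the nonlocal gradient term into $\|d_\delta\|_{L^2(\Omega_\delta)}^2$. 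Without this Bogovski\u{\i}-type argument your pressure estimate does not close, and the proposal as written is incomplete.
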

\begin{proof}
Let $\be_\delta(\bx)=\bfu(\bx)-\bfu_\delta(\bx)$ and $d_\delta=p-p_\delta-\frac{1}{|\Omega_\delta|}\int_{\Omega_\delta}(p(\bx)-p_\delta(\bx))\mathd\bx$, then $\be_\delta$ and $d_\delta$ satisfy
\begin{align}
  \label{eq:errors}
\left\{\begin{array}{rclc}
\D  -\frac{1}{\delta^2}\int_\Omega R_\delta(\bx,\by)(\be_\delta(\bx)-\be_\delta(\by))\mathd \by
+\D \frac{1}{2\delta^2}\int_\Omega R_\delta(\bx,\by)(\bx-\by) d_\delta(\by)\mathd \by&=&\D \bm{r}_\bfu(\bx),& \bx\in \Omega_\delta, \\
\be_\delta(\bx)&=&\bfu(\bx),&\bx\in \mathcal{V}_\delta,\\
\D \frac{1}{2\delta^2}\int_\Omega R_\delta(\bx,\by)(\bx-\by)\cdot \be_\delta(\by)\mathd \by - \int_\Omega \bar{R}_\delta(\bx,\by)(d_\delta(\bx)-d_\delta(\by))\mathd \by & = &
\D r_p(\bx), & \bx\in \Omega,\\
\D \int_{\Omega_\delta}d_\delta(\bx)\mathd\bx&=&0,&
\end{array}\right.
\end{align}
where
\begin{align}
  \label{eq:residual-u}
  \bm{r}_\bfu(\bx)=&\int_{\M}\bar{R}_\delta(\bx,\by)\Delta \bfu(\by)\mathd\by+\frac{1}{\delta^2}\int_{\M}R_\delta(\bx,\by)(\bfu(\bx)-\bfu(\by))\mathd\by,&\quad \forall \bx\in \Omega_\delta\\
\label{eq:residual-p}
 r_p(\bx)= & - \int_\Omega \bar{R}_\delta(\bx,\by)(p(\bx)-p(\by))\mathd \by,& \quad \forall \bx\in \Omega.
\end{align}
First, we focus on the following estimate
\begin{align}
  \label{eq:error-1}
&  \frac{1}{\delta^2}\int_{\M_\delta}\be_\delta(\bx)\cdot\int_{\M}R_\delta(\bx,\by)(\be_\delta(\bx)-\be_\delta(\by))\mathd\by\mathd\bx\\
=& \frac{1}{\delta^2}\int_{\M_\delta}\be_\delta(\bx)\cdot\int_{\M_\delta}R_\delta(\bx,\by)(\be_\delta(\bx)-\be_\delta(\by))\mathd\by\mathd\bx+
 \frac{1}{\delta^2}\int_{\M_\delta}\be_\delta(\bx)\cdot\int_{\V_\delta}R_\delta(\bx,\by)(\be_\delta(\bx)-\be_\delta(\by))\mathd\by\mathd\bx\nonumber\\
=& \frac{1}{2\delta^2}\int_{\M_\delta}\int_{\M_\delta}R_\delta(\bx,\by)|\be_\delta(\bx)-\be_\delta(\by)|^2\mathd\bx\mathd\by
+\frac{1}{\delta^2}\int_{\M_\delta}\be_\delta(\bx)\cdot\int_{\V_\delta}R_\delta(\bx,\by)(\be_\delta(\bx)-\be_\delta(\by))\mathd\by\mathd\bx.\nonumber
\end{align}
The second term of the right hand side of \eqref{eq:error-1} can be calculated as
\begin{align}
  \label{eq:error-2}
&  \frac{1}{\delta^2}\int_{\M_\delta}\be_\delta(\bx)\cdot\int_{\V_\delta}R_\delta(\bx,\by)(\be_\delta(\bx)-\be_\delta(\by))\mathd\by\mathd\bx\\
=&\frac{1}{\delta^2}\int_{\M_\delta}|\be_\delta(\bx)|^2\left(\int_{\V_\delta}R_\delta(\bx,\by)\mathd\by\right)\mathd\bx
-\frac{1}{\delta^2}\int_{\M_\delta}\be_\delta(\bx)\cdot\left(\int_{\V_\delta}R_\delta(\bx,\by)\bfu(\by)\mathd\by\right)\mathd\bx.
\nonumber
\end{align}
Here we use the definition of $\be_\delta$ and the volume constraint condition $\bfu_\delta(\bx)=0,\;\bx\in \V_\delta$ to get that $\be_\delta(\bx)=u(\bx),\;\bx\in \V_\delta$. 

The first term is positive which is good for us. We only need to bound the second term of \eqref{eq:error-2}.
First, the second term can be bounded as following
\begin{align}
  \label{eq:error-3}\quad\quad
&  \frac{1}{\delta^2}\left|\int_{\M_\delta}\be_\delta(\bx)\cdot\left(\int_{\V_\delta}R_\delta(\bx,\by)\bfu(\by)\mathd\by\right)\mathd\bx\right|\\
\le& \frac{1}{\delta^2}\int_{\M_\delta}|\be_\delta(\bx)|\left(\int_{\V_\delta}R_\delta(\bx,\by)\mathd\by\right)^{1/2}\left(\int_{\V_\delta}R_\delta(\bx,\by)|\bfu(\by)|^2\mathd\by\right)^{1/2}\mathd\bx\nonumber\\
\le&\frac{1}{\delta^2}\left(\int_{\M_\delta}\frac{1}{2}|\be_\delta(\bx)|^2\left(\int_{\V_\delta}R_\delta(\bx,\by)\mathd\by\right)\mathd\bx+
2\int_{\M_\delta}\left(\int_{\V_\delta}R_\delta(\bx,\by)|\bfu(\by)|^2\mathd\by\right)\mathd\bx\right)\nonumber\\
\le& \frac{1}{2\delta^2}\int_{\M_\delta}|\be_\delta(\bx)|^2\left(\int_{\V_\delta}R_\delta(\bx,\by)\mathd\by\right)\mathd\bx+
\frac{2}{\delta^2}\int_{\V_\delta}|\bfu(\by)|^2\left(\int_{\M_\delta} R_\delta(\bx,\by)\mathd \bx\right)\mathd\by\nonumber\\
\le& \frac{1}{2\delta^2}\int_{\M_\delta}|\be_\delta(\bx)|^2\left(\int_{\V_\delta}R_\delta(\bx,\by)\mathd\by\right)\mathd\bx+
\frac{C}{\delta^2}\int_{\V_\delta}|\bfu(\by)|^2\mathd\by\nonumber\\
\le& \frac{1}{2\delta^2}\int_{\M_\delta}|\be_\delta(\bx)|^2\left(\int_{\V_\delta}R_\delta(\bx,\by)\mathd\by\right)\mathd\bx+
C\delta\|\bff\|^2_{H^1(\M)}.\nonumber
\end{align}
Here we use Lemma \ref{lem:u-boundary} in Appendix \ref{sec:appendix3} to get the last inequality.
By substituting \eqref{eq:error-3}, \eqref{eq:error-2} in \eqref{eq:error-1}, we get
\begin{eqnarray}
  \label{eq:error-est}
&& \left| \frac{1}{\delta^2}\int_{\M_\delta}\be_\delta(\bx)\cdot\int_{\M}R_\delta(\bx,\by)(\be_\delta(\bx)-\be_\delta(\by))\mathd\by\mathd\bx\right|\\
&\ge&  \frac{1}{2\delta^2}\int_{\M_\delta}\int_{\M_\delta}R_\delta(\bx,\by)|\be_\delta(\bx)-\be_\delta(\by)|^2\mathd\bx\mathd\by\nonumber\\
&&+
\frac{1}{2\delta^2}\int_{\M_\delta}|\be_\delta(\bx)|^2\left(\int_{\V_\delta}R_\delta(\bx,\by)\mathd\by\right)\mathd\bx-C\|\bff\|_{H^1(\M)}^2\delta\nonumber.
\end{eqnarray}
This is the key estimate to show the convergence.

We also need the following bound
\begin{align}
  \label{eq:u-up-1}
  &\left|\frac{1}{\delta^2}\int_{\Omega_\delta}\be_\delta(\bx)\cdot\left(\int_\Omega R_\delta(\bx,\by)(\bx-\by) d_\delta(\by)\mathd\by\right)\mathd\bx
+\frac{1}{\delta^2}\int_{\Omega}d_\delta(\bx)\left(\int_\Omega R_\delta(\bx,\by)(\bx-\by)\cdot \be_\delta(\by)\mathd\by\right)\mathd\bx\right|\\
= & \left|\frac{1}{\delta^2}\int_{\Omega}d_\delta(\bx)\left(\int_{\V_\delta} R_\delta(\bx,\by)(\bx-\by)\cdot \be_\delta(\by)\mathd\by\right)\mathd\bx\right|\nonumber\\
\le& \frac{1}{\delta}\int_{\Omega}\left(\int_{\V_\delta} R_\delta(\bx,\by)|d_\delta(\bx)||\bfu(\by)|\mathd\by\right)\mathd\bx\nonumber\\
\le&\frac{1}{\delta}\left[\int_{\Omega}\left(\int_{\V_\delta} R_\delta(\bx,\by)|d_\delta(\bx)|^2\mathd\by\right)\mathd\bx
\int_{\Omega}\left(\int_{\V_\delta} R_\delta(\bx,\by)|\bfu(\by)|^2\mathd\by\right)\mathd\bx\right]^{1/2}\nonumber\\
\le&C\sqrt{\delta}\|\bff\|_{H^1(\Omega)}\|d_\delta\|_{L^2(\Omega)}.\nonumber
\end{align}
Multiplying $\be_\delta(\bx)$, $d_\delta(\bx)$ on both sides of the first and third equations in \eqref{eq:errors} and integrating over $\Omega_\delta$, $\Omega$
respectively and adding them together, using \eqref{eq:error-est}, \eqref{eq:u-up-1}, we have

\begin{align}
  \label{eq:u-error}
 & \frac{1}{\delta^2}\int_{\Omega_\delta}\int_{\Omega_\delta}R_\delta(\bx,\by)|\be_\delta(\bx)-\be_\delta(\by)|^2\mathd\bx\mathd\by+\frac{1}{2\delta^2}\int_{\Omega_\delta}|\be_\delta(\bx)|^2
\left(\int_{\V_\delta}R_\delta(\bx,\by)\mathd\by\right)\mathd\bx\\
&+\int_{\Omega}\int_{\Omega}R_\delta(\bx,\by)|d_\delta(\bx)-d_\delta(\by)|^2\mathd\bx\mathd\by\nonumber\\
\le& (\|\bm{r}_\bfu\|_{L^2(\Omega_\delta)})\|\bm{e}_\delta\|_{L^2(\Omega_\delta)}+\|r_p\|_{L^2(\Omega)}\|d_\delta\|_{L^2(\Omega)}+C\sqrt{\delta}\|\bff\|_{H^1(\Omega)}\|d_\delta\|_{L^2(\Omega)}
+C\delta\|\bff\|_{H^1(\Omega)}^2.\nonumber
\end{align}
To simplify the notation, we denote the right hand side of \eqref{eq:u-error} as $Q^2$.

It is well known (e.g. Section 3.3 of \cite{NS-book}) that with the condition that $$\int_{\Omega_\delta}d_\delta(\bx)\mathd\bx=0,$$
there exists at least one function $\bm{\psi}\in H_0^{1}(\Omega_\delta)
$, such that
\begin{align} 
\nabla \cdot \bm{\psi}(\bx)=d_\delta(\bx),\quad \bx\in \Omega_\delta, \quad \text{and}
\quad \|\bm{\psi}\|_{H^{1}(\Omega_\delta)}\le c\|d_\delta\|_{L^2(\Omega_\delta)}.
\label{eq:bound-psi}
\end{align}
and $c$ is a constant independent on $\delta$, the proof can be found in Appendix  \ref{sec:appendix4}. 

Then, we extend $\bm{\psi}$ to $\Omega$ by assigning the value on $\mathcal{V}_\delta$ to be 0 and denote the new function also by $\bm{\psi}$. Obviously, we have
\begin{align}
\label{eq:bound-psi-ext-error}
  \bm{\psi}\in H_0^{1}(\Omega_\delta)\cap H_0^{1}(\Omega) ,\quad \|\bm{\psi}\|_{H^{1}(\Omega)}\le c\|d_\delta\|_{L^2(\Omega_\delta)}
\end{align}
Using the third equation of \eqref{eq:errors}, we have
\begin{align}
  \label{eq:p-error-0}
\bar{w}_\delta(\bx)d_\delta(\bx)&=\int_\Omega \bar{R}_\delta(\bx,\by)d_\delta(\by)\mathd\by+\frac{1}{2\delta^2}\int_\Omega R_\delta(\bx,\by)(\bx-\by)\cdot \be_\delta(\by)\mathd \by-r_p(\bx)\nonumber\\
=&\int_{\Omega_\delta} \bar{R}_\delta(\bx,\by)\nabla\cdot \bm{\psi}(\by)\mathd\by+\frac{1}{2\delta^2}\int_{\Omega_\delta} R_\delta(\bx,\by)(\bx-\by)\cdot \be_\delta(\by)\mathd \by\nonumber\\
&+
\frac{1}{2\delta^2}\int_{\V_\delta} R_\delta(\bx,\by)(\bx-\by)\cdot \be_\delta(\by)\mathd \by
+\int_{\mathcal{V}_\delta} \bar{R}_\delta(\bx,\by)d_\delta(\by)\mathd\by-r_p(\bx)\nonumber\\
=&-\frac{1}{2\delta^2}\int_{\Omega_\delta} R_\delta(\bx,\by)(\bx-\by)\cdot \bar{\bm{\psi}}(\by)\mathd\by
+
\frac{1}{2\delta^2}\int_{\V_\delta} R_\delta(\bx,\by)(\bx-\by)\cdot \bfu(\by)\mathd \by\nonumber\\
&+\int_{\mathcal{V}_\delta} \bar{R}_\delta(\bx,\by)d_\delta(\by)\mathd\by-r_p(\bx)
\end{align}
where $\bar{w}_\delta(\bx)=\int_\Omega \bar{R}_\delta(\bx,\by)\mathd\by$ and $\bar{\bm{\psi}}=\bm{\psi}-\be_\delta$.

Then, it follows that
\begin{align}
\label{eq:p-error-1-0}
&\frac{1}{2\delta^2}\int_{\Omega_\delta}\bar{\bm{\psi}}(\bx)\left(\int_\Omega R_\delta(\bx,\by)(\bx-\by)d_\delta(\by)\mathd \by\right)\mathd \bx\nonumber\\
=&-\frac{1}{2\delta^2}\int_{\Omega}d_\delta(\bx)\left(\int_{\Omega_\delta} R_\delta(\bx,\by)(\bx-\by)\bar{\bm{\psi}}(\by)\mathd \by\right)\mathd \bx\nonumber\\
=&\int_{\Omega}d_\delta^2(\bx)\bar{w}_\delta(\bx)\mathd\bx-\int_{\Omega}d_\delta(\bx) \left(\int_{\mathcal{V}_\delta}\bar{R}_\delta(\bx,\by)d_\delta(\by)\mathd\by\right)\mathd\bx
\nonumber\\
&-\int_{\Omega}d_\delta(\bx) \left(\int_{\mathcal{V}_\delta}R_\delta(\bx,\by)(\bx-\by)\cdot \bfu(\by)\mathd\by\right)\mathd\bx
+\int_\Omega d_\delta(\bx)r_p(\bx)\mathd\bx
\end{align}
The first term is positive which is a good term. The second term becomes
\begin{align}
\label{eq:p-error-1-1}
&  -\int_{\Omega}d_\delta(\bx) \left(\int_{\mathcal{V}_\delta}\bar{R}_\delta(\bx,\by)d_\delta(\by)\mathd\by\right)\mathd\bx\\
=&\int_{\Omega}d_\delta(\bx) \left(\int_{\mathcal{V}_\delta}\bar{R}_\delta(\bx,\by)(d_\delta(\bx)-d_\delta(\by))\mathd\by\right)\mathd\bx
-\frac{1}{2\delta^2}\int_{\Omega}d_\delta^2(\bx) \left(\int_{\mathcal{V}_\delta}\bar{R}_\delta(\bx,\by)\mathd\by\right)\mathd\bx.\nonumber
\end{align}
The second term of \eqref{eq:p-error-1-1} can be controlled by the first term of \eqref{eq:p-error-1-0}. And the first term is bounded by
\begin{align}
\label{eq:p-error-1-2}
 & \int_{\Omega}d_\delta(\bx) \left(\int_{\mathcal{V}_\delta}\bar{R}_\delta(\bx,\by)(d_\delta(\bx)-d_\delta(\by))\mathd\by\right)\mathd\bx\\
=&\frac{1}{2}\int_{\mathcal{V}_\delta}\int_{\mathcal{V}_\delta}\bar{R}_\delta(\bx,\by)(d_\delta(\bx)-d_\delta(\by))^2\mathd\by\mathd\bx+
 \int_{\Omega_\delta}d_\delta(\bx) \left(\int_{\mathcal{V}_\delta}\bar{R}_\delta(\bx,\by)(d_\delta(\bx)-d_\delta(\by))\mathd\by\right)\mathd\bx\nonumber\\
\ge &\frac{1}{2}\int_{\mathcal{V}_\delta}\int_{\mathcal{V}_\delta}\bar{R}_\delta(\bx,\by)(d_\delta(\bx)-d_\delta(\by))^2\mathd\by\mathd\bx
+\int_{\Omega_\delta} \left(\int_{\mathcal{V}_\delta}\bar{R}_\delta(\bx,\by)(d_\delta(\bx)-d_\delta(\by))^2\mathd\by\right)\mathd\bx\nonumber\\
&-\left|\int_{\Omega_\delta} \left(\int_{\mathcal{V}_\delta}\bar{R}_\delta(\bx,\by)(d_\delta(\bx)-d_\delta(\by))d_\delta(\by)\mathd\by\right)\mathd\bx\right|\nonumber\\
\ge&\frac{1}{2}\int_{\Omega} \left(\int_{\mathcal{V}_\delta}\bar{R}_\delta(\bx,\by)(d_\delta(\bx)-d_\delta(\by))^2\mathd\by\right)\mathd\bx
-\frac{1}{2}\int_{\mathcal{V}_\delta} d_\delta^2(\bx)\left(\int_{\Omega_\delta}\bar{R}_\delta(\bx,\by)\mathd\by\right)\mathd\bx.\nonumber
\end{align}
Combining \eqref{eq:p-error-1-0}-\eqref{eq:p-error-1-2}, we get
\begin{align}
\label{eq:p-error-1}
&\frac{1}{2\delta^2}\int_{\Omega_\delta}\bar{\bm{\psi}}(\bx)\left(\int_\Omega R_\delta(\bx,\by)(\bx-\by)d_\delta(\by)\mathd \by\right)\mathd \bx\\
\ge&\int_{\Omega_\delta}d_\delta^2(\bx)\left(\int_{\Omega_\delta}\bar{R}_\delta(\bx,\by)\mathd\by\right)\mathd\bx-\frac{1}{2\delta^2}\int_\Omega d_\delta(\bx)
\left(\int_{\V_\delta} R_\delta(\bx,\by)(\bx-\by)\cdot \bfu(\by)\mathd \by\right)\mathd\bx\nonumber\\
&+\int_\Omega d_\delta(\bx)r_p(\bx)\mathd\bx.\nonumber
\end{align}
In addition, we have
\begin{align}
  \label{eq:p-error-2}
  &\left|\frac{1}{2\delta^2}\int_\Omega d_\delta(\bx)
\left(\int_{\V_\delta} R_\delta(\bx,\by)(\bx-\by)\cdot \bfu(\by)\mathd \by\right)\mathd\bx\right|\\
\le&\frac{1}{2\delta}\int_\Omega |d_\delta(\bx)|
\left(\int_{\V_\delta} R_\delta(\bx,\by)|\bfu(\by)|\mathd \by\right)\mathd\bx\nonumber\\
\le&\frac{1}{2\delta}\left[\int_\Omega |d_\delta(\bx)|^2
\left(\int_{\V_\delta} R_\delta(\bx,\by)\mathd \by\right)\mathd\bx
\int_\Omega
\left(\int_{\V_\delta} R_\delta(\bx,\by)|\bfu(\by)|^2\mathd \by\right)\mathd\bx\right]^{1/2}\nonumber\\
\le&C\sqrt{\delta}\|\bff\|_{H^1(\Omega)}\|d_\delta\|_{L^2(\Omega)}.\nonumber
\end{align}
and
\begin{align}
  \label{eq:p-error-3}
  \left|\int_\Omega d_\delta(\bx)r_p(\bx)\mathd\bx\right|=&\left|\int_\Omega d_\delta(\bx)\left(\int_\Omega \bar{R}_\delta(\bx,\by)(p(\bx)-p(\by))\mathd\by\right)\mathd\bx\right|\\
\le& C\delta\|p\|_{H^1(\Omega)}\|d_\delta\|_{L^2(\Omega)}\nonumber\\
\le&C\delta\|\bff\|_{H^1(\Omega)}\|d_\delta\|_{L^2(\Omega)}.\nonumber
\end{align}

Multiplying $\bar{\bm{\psi}}$ on both sides of the first equation of \eqref{eq:errors} and using \eqref{eq:p-error-1}, \eqref{eq:p-error-2}, \eqref{eq:p-error-3},
we have
\begin{align}
\label{eq:p-error-4}
\|d_\delta\|_{L^2(\Omega_\delta)}^2\le& \frac{1}{\delta^2}\int_{\Omega_\delta}\int_{\Omega_\delta} R_\delta(\bx,\by)(\be_\delta(\bx)-\be_\delta(\by))\cdot(\bar{\bm{\psi}}(\bx)-\bar{\bm{\psi}}(\by))\mathd \bx\mathd \by\nonumber\\
&+\frac{1}{\delta^2}\int_{\Omega_\delta}\bar{\bm{\psi}}(\bx)\cdot\left(\int_{\V_\delta} R_\delta(\bx,\by)(\be_\delta(\bx)-\be_\delta(\by))\mathd \by\right)\mathd \bx\nonumber\\
&
+\|\bar{\bm{\psi}}\|_{L^2(\Omega_\delta)}(\|\bm{r}_\bfu\|_{L^2(\Omega_\delta)})+C\sqrt{\delta}\|\bff\|_{H^1(\Omega)}\|d_\delta\|_{L^2(\Omega)}
\end{align}
The first term can be bounded as
\begin{align}
  \label{eq:p-error-5}
&\left|\frac{1}{\delta^2}\int_{\Omega_\delta}\int_{\Omega_\delta} R_\delta(\bx,\by)(\be_\delta(\bx)-\be_\delta(\by))\cdot(\bar{\bm{\psi}}(\bx)-\bar{\bm{\psi}}(\by))\mathd \bx\mathd \by\right|\\
 \le& \left( \frac{1}{\delta^2}\int_{\Omega_\delta}\int_{\Omega_\delta} R_\delta(\bx,\by)|\be_\delta(\bx)-\be_\delta(\by)|^2\mathd \bx\mathd \by\right)^{1/2}
\left(\frac{1}{\delta^2}\int_{\Omega_\delta}\int_{\Omega_\delta} R_\delta(\bx,\by)|\bar{\bm{\psi}}(\bx)-\bar{\bm{\psi}}(\by)|^2\mathd \bx\mathd \by\right)^{1/2}\nonumber\\
\le & \left( \frac{1}{\delta^2}\int_{\Omega_\delta}\int_{\Omega_\delta} R_\delta(\bx,\by)|\be_\delta(\bx)-\be_\delta(\by)|^2\mathd \bx\mathd \by\right)^{1/2}
\left(\left(\frac{1}{\delta^2}\int_{\Omega_\delta}\int_{\Omega_\delta} R_\delta(\bx,\by)|\bm{\psi}(\bx)-\bm{\psi}(\by)|^2\mathd \bx\mathd \by\right)^{1/2}\right.\nonumber\\
&\left.+
\left(\frac{1}{\delta^2}\int_{\Omega_\delta}\int_{\Omega_\delta} R_\delta(\bx,\by)|\be_\delta(\bx)-\be_\delta(\by)|^2\mathd \bx\mathd \by\right)^{1/2}\right)\nonumber\\
\le& Q^2+CQ\|\bm{\psi}\|_{H^1(\Omega_\delta)}\le Q^2+CQ\|d_\delta\|_{L^2(\Omega_\delta)},\nonumber
\end{align}

The estimate of the second term of \eqref{eq:p-error-4} is more involved. First
\begin{align}
  \label{eq:p-error-6}
&  \left|\frac{1}{\delta^2}\int_{\Omega_\delta}\bar{\bm{\psi}}(\bx)\cdot\left(\int_{\V_\delta} R_\delta(\bx,\by)(\be_\delta(\bx)-\be_\delta(\by))\mathd \by\right)\mathd \bx\right|\\
\le& \left|\frac{1}{\delta^2}\int_{\Omega_\delta}\left(\int_{\V_\delta} R_\delta(\bx,\by)(\bar{\bm{\psi}}(\bx)-\bar{\bm{\psi}}(\by))\cdot(\be_\delta(\bx)-\be_\delta(\by))\mathd \by\right)\mathd \bx\right|
\nonumber\\
&+\left|\frac{1}{\delta^2}\int_{\V_\delta}\bm{u}(\bx)\cdot\left(\int_{\Omega_\delta} R_\delta(\bx,\by)(\be_\delta(\bx)-\be_\delta(\by))\mathd \by\right)\mathd \bx\right|\nonumber\\
\le&\left[\left(\frac{1}{\delta^2}\int_{\Omega_\delta}\left(\int_{\V_\delta} R_\delta(\bx,\by)|\bar{\bm{\psi}}(\bx)-\bar{\bm{\psi}}(\by)|^2\mathd \by\right)\mathd \bx\right)^{1/2}
+\left(\frac{1}{\delta^2}\int_{\V_\delta}|\bm{u}(\bx)|^2\left(\int_{\Omega_\delta} R_\delta(\bx,\by)\mathd \by\right)\mathd \bx\right)^{1/2}\right]\nonumber\\
&\left(\frac{1}{\delta^2}\int_{\Omega_\delta}\left(\int_{\V_\delta} R_\delta(\bx,\by)|\be_\delta(\bx)-\be_\delta(\by)|^2\mathd \by\right)\mathd \bx\right)^{1/2}\nonumber\\
\le&C\left(
\|\bm{\psi}\|_{H^1(\Omega)}+\sqrt{\delta}\|\bff\|_{H^1(\Omega)}
\right)\left(\frac{1}{\delta^2}\int_{\Omega_\delta}\left(\int_{\V_\delta} R_\delta(\bx,\by)|\be_\delta(\bx)-\be_\delta(\by)|^2\mathd \by\right)\mathd \bx\right)^{1/2}\nonumber\\
&+\frac{C}{\delta^2}\int_{\Omega_\delta}\left(\int_{\V_\delta} R_\delta(\bx,\by)|\be_\delta(\bx)-\be_\delta(\by)|^2\mathd \by\right)\mathd \bx.\nonumber
\end{align}
Moreover,
\begin{align}
  \label{eq:p-error-7}
&  \frac{1}{\delta^2}\int_{\Omega_\delta}\left(\int_{\V_\delta} R_\delta(\bx,\by)|\be_\delta(\bx)-\be_\delta(\by)|^2\mathd \by\right)\mathd \bx\\
\le&\frac{2}{\delta^2}\int_{\Omega_\delta}\left(\int_{\V_\delta} R_\delta(\bx,\by)|\be_\delta(\bx)|^2\mathd \by\right)\mathd \bx+
\frac{2}{\delta^2}\int_{\Omega_\delta}\left(\int_{\V_\delta} R_\delta(\bx,\by)|\be_\delta(\by)|^2\mathd \by\right)\mathd \bx\nonumber\\
\le&\frac{2}{\delta^2}\int_{\Omega_\delta}|\be_\delta(\bx)|^2\left(\int_{\V_\delta} R_\delta(\bx,\by)\mathd \by\right)\mathd \bx+
\frac{2}{\delta^2}\int_{\Omega_\delta}\left(\int_{\V_\delta} R_\delta(\bx,\by)|\bfu(\by)|^2\mathd \by\right)\mathd \bx\nonumber\\
\le& Q^2+C\delta\|\bm{f}\|_{H^1(\Omega)}^2.\nonumber
\end{align}
Combining \eqref{eq:p-error-6} and \eqref{eq:p-error-7}, we get
\begin{align}
  \label{eq:p-error-8}
  &  \left|\frac{1}{\delta^2}\int_{\Omega_\delta}\bar{\bm{\psi}}(\bx)\cdot\left(\int_{\V_\delta} R_\delta(\bx,\by)(\be_\delta(\bx)-\be_\delta(\by))\mathd \by\right)\mathd \bx\right|\nonumber\\
\le&\left(
\|d_\delta  \|_{L^2(\Omega_\delta)}+\sqrt{\delta}\|\bff\|_{H^1(\Omega)}\right)(Q+\sqrt{\delta}\|\bm{f}\|_{H^1(\Omega)})+Q^2+\delta\|\bm{f}\|_{H^1(\Omega)}^2
\end{align}
Substituting \eqref{eq:p-error-5} and \eqref{eq:p-error-8} in \eqref{eq:p-error-4},
\begin{align}
  \label{eq:p-error-9}
  \|d_\delta\|_{L^2(\Omega_\delta)}^2\le& Q^2+C\left(\|d_\delta\|_{L^2(\Omega_\delta)}+\sqrt{\delta}\|\bff\|_{H^1(\Omega)}\right)\left(Q+\sqrt{\delta}\|\bm{f}\|_{H^1(\Omega)}\right)
+\|\bar{\bm{\psi}}\|_{L^2(\Omega_\delta)}\|\bm{r}_\bfu\|_{L^2(\Omega)}\nonumber\\
&+C\sqrt{\delta}\|\bff\|_{H^1(\Omega)}\|d_\delta\|_{L^2(\Omega)}\nonumber\\
\le& Q^2+C\left(\|d_\delta\|_{L^2(\Omega_\delta)}+\sqrt{\delta}\|\bff\|_{H^1(\Omega)}\right)\left(Q+\sqrt{\delta}\|\bm{f}\|_{H^1(\Omega)}\right)\nonumber\\
&+\left(\|d_\delta\|_{L^2(\Omega_\delta)}+\|\be_\delta\|_{L^2(\Omega_\delta)}\right)(\|\bm{r}_\bfu\|_{L^2(\Omega_\delta)})+C\sqrt{\delta}\|\bff\|_{H^1(\Omega)}\|d_\delta\|_{L^2(\Omega)}
\end{align}
On the other hand, using Lemma  \ref{cor:l2-inner-ave}, 
{ we have}
\begin{align}
\label{eq:p-inner-global}
  \|d_\delta\|_{L^2(\Omega)}^2\le C\int_\Omega\int_{\Omega}|d_\delta(\bx)-d_\delta(\by)|^2 R_{\delta}(\bx,\by)\mathd\by\mathd\bx+
C\|d_\delta\|_{L^2(\Omega_\delta)}^2.
\end{align}
Then it follows from \eqref{eq:u-error} and above inequality
\begin{align}
  \label{eq:p-error}
  \|d_\delta\|_{L^2(\Omega)}^2
\le& Q^2+C\left(\|d_\delta\|_{L^2(\Omega)}+\sqrt{\delta}\|\bff\|_{H^1(\Omega)}\right)\left(Q+\sqrt{\delta}\|\bm{f}\|_{H^1(\Omega)}\right)\nonumber\\
&+\left(\|d_\delta\|_{L^2(\Omega)}+\|\be_\delta\|_{L^2(\Omega_\delta)}\right)(\|\bm{r}_\bfu\|_{L^2(\Omega_\delta)})
\end{align}
Theorem \ref{thm:integral_error} gives that
\begin{align}
  \label{eq:est-error}
  \|\bm{r}_\bfu\|_{L^2(\Omega)}\le C\delta\|\bff\|_{H^1(\Omega)},\quad \|r_p\|_{L^2(\Omega)}\le C\delta\|\bff\|_{H^1(\Omega)}
\end{align}
Following Lemma \ref{cor:l2-inner} and \eqref{eq:u-error}, we have
\begin{align*}
  \|\bm{e}_\delta\|_{L^2(\Omega_\delta)}^2\le Q^2\le C\sqrt{\delta}\|\bff\|_{H^1(\Omega)}\|\bm{e}_\delta\|_{L^2(\Omega_\delta)}+C\delta\|\bff\|_{H^1(\Omega)}^2+C\sqrt{\delta}\|\bff\|_{H^1(\Omega)}\|d_\delta\|_{L^2(\Omega)}
\end{align*}
which implies that
\begin{align}
  \label{eq:u-error-est-1}
  \|\bm{e}_\delta\|_{L^2(\Omega_\delta)}^2\le C\delta\|\bff\|_{H^1(\Omega)}^2+C\sqrt{\delta}\|\bff\|_{H^1(\Omega)}\|d_\delta\|_{L^2(\Omega)}
\end{align}
Consequently, $Q^2$ is bounded by
\begin{align}
  \label{eq:est-Q}
  Q^2\le  C\delta\|\bff\|_{H^1(\Omega)}^2+C\sqrt{\delta}\|\bff\|_{H^1(\Omega)}\|d_\delta\|_{L^2(\Omega)}
\end{align}
Now, we have the bound of $\|d_\delta\|_{L^2(\Omega)}$ from \eqref{eq:p-error} and \eqref{eq:est-Q},
\begin{align*}
   \|d_\delta\|_{L^2(\Omega)}^2\le& C\delta\|\bff\|_{H^1(\Omega)}^2+C\sqrt{\delta}\|\bff\|_{H^1(\Omega)}\|d_\delta\|_{L^2(\Omega)}+\left(\|d_\delta\|_{L^2(\Omega)}+\sqrt{\delta}\|\bff\|_{H^1(\Omega)}\right)Q
\nonumber\\
\le& C\delta\|\bff\|_{H^1(\Omega)}^2+C\sqrt{\delta}\|\bff\|_{H^1(\Omega)}\|d_\delta\|_{L^2(\Omega)}+\left(\frac{1}{2}\|d_\delta\|_{L^2(\Omega)}^2+\delta\|\bff\|_{H^1(\Omega)}^2\right) \nonumber
\end{align*}
Therefore
\begin{align}
  \label{eq:p-error-est-1}
   \|d_\delta\|_{L^2(\Omega)}^2\le& C\delta\|\bff\|_{H^1(\Omega)}^2+C\sqrt{\delta}\|\bff\|_{H^1(\Omega)}\|d_\delta\|_{L^2(\Omega)}
\end{align}
Then the bound of $\|d_\delta\|_{L^2(\Omega)}$ is obtained
\begin{align}
  \label{eq:p-error-est}
   \|d_\delta\|_{L^2(\Omega)}\le& C\sqrt{\delta}\|\bff\|_{H^1(\Omega)}.
\end{align}
The bound of $\|\bm{e}_\delta\|_{L^2(\Omega_\delta)}$ follows from \eqref{eq:u-error-est-1} and \eqref{eq:p-error-est},
\begin{align}
  \label{eq:u-error-est}
  \|\bm{e}_\delta\|_{L^2(\Omega_\delta)}\le C\sqrt{\delta}\|\bff\|_{H^1(\Omega)}.
\end{align}
and 
\begin{align}
  \label{eq:p-error-final}
  \|p-p_\delta\|_{L^2(\Omega)}\le \|d_\delta\|_{L^2(\Omega)}+|\bar{d}_\delta|\le C\sqrt{\delta}\|\bff\|_{H^1(\Omega)},
\end{align}
where $\bar{d}_\delta=\frac{1}{|\Omega|}\int_\Omega d_\delta(\bx)\mathd\bx$ and we use the fact that
\begin{align*}
  |\bar{d}_\delta|=\frac{1}{|\Omega|}\left|\int_\Omega d_\delta(\bx)\mathd\bx\right|\le \frac{1}{\sqrt{|\Omega|}}\|d_\delta\|_{L^2(\Omega)}.
\end{align*}

Finally, the bound of $\|\bm{e}_\delta\|_{H^1(\Omega_\delta)}$ can be derived from 
\begin{align}
  \label{eq:u-error-h1-0}
  \be_\delta(\bx)=&\frac{1}{w_\delta(\bx)}\int_{\Omega} R_\delta(\bx,\by)\be_\delta(\by)\mathd \by+\frac{1}{2w_\delta(\bx)}\int_\Omega R_\delta(\bx,\by)(\bx-\by)d_\delta(\by)\mathd\by
-\delta^2\bm{r}_\bfu(\bx).
\end{align}
We are left with estimating the three terms on the right hand side one by one. The third term is easy to bound using Theorem \ref{thm:integral_error},
\begin{align*}
  \|\delta^2\nabla\bm{r}_\bfu(\bx)\|_{L^2(\Omega_\delta)}\le \delta^2\|\bff\|_{H^1(\Omega)}
\end{align*}

Notice that for any $\bx\in \Omega_\delta$, $w_\delta(\bx)$ is a positive constant. Then we have
\begin{align*}
  &\|\nabla\left(\frac{1}{2w_\delta(\bx)}\int_\Omega R_\delta(\bx,\by)(\bx-\by)d_\delta(\by)\mathd\by\right)\|_{L^2(\Omega_\delta)}^2\\
\le& C\int_{\Omega_\delta}\left|\int_{\Omega}\nabla_{\bx} R_\delta(\bx,\by)(\bx-\by)d_\delta(\by)\mathd\by\right|^2\mathd\bx+
C\int_{\Omega_\delta}\left(\int_{\Omega} R_\delta(\bx,\by)d_\delta(\by)\mathd\by\right)^2\mathd\bx\nonumber\\
\le& \frac{C}{\delta^2}\int_{\Omega}\left|\int_{\Omega}{|R'_\delta(\bx,\by)|}|\bx-\by|^2 d_\delta(\by)\mathd\by\right|^2\mathd\bx+
C\int_{\Omega}\left(\int_{\Omega} R_\delta(\bx,\by)d_\delta(\by)\mathd\by\right)^2\mathd\bx\nonumber\\
\le& C\int_{\Omega}\left(\int_{\Omega}{|R'_\delta(\bx,\by)|} d_\delta(\by)\mathd\by\right)^2\mathd\bx+C\int_{\Omega}\left(\int_{\Omega} R_\delta(\bx,\by)d_\delta(\by)\mathd\by\right)^2\mathd\bx
\nonumber\\
\le& C\|d_\delta\|_{L^2(\Omega)}^2\le C\sqrt{\delta}\|\bff\|_{H^1(\Omega)}.\nonumber
\end{align*}
where
$${R'_\delta(\bx,\by)}=C_\delta R'\left(\frac{|\bx-\by|^2}{4\delta^2}\right),\quad R'(r)=\frac{\mathd }{\mathd r}R(r).$$

{The first term of \eqref{eq:u-error-h1-0}
can be split into two terms}
\begin{align*}
\frac{1}{w_\delta(\bx)}\int_{\Omega} R_\delta(\bx,\by)\be_\delta(\by)\mathd \by=\frac{1}{w_\delta(\bx)}\int_{\Omega_\delta} R_\delta(\bx,\by)\be_\delta(\by)\mathd \by
+\frac{1}{w_\delta(\bx)}\int_{\V_\delta} R_\delta(\bx,\by)\bfu(\by)\mathd \by
\end{align*}
Using Lemma \ref{lem:u-boundary},
\begin{align*}
  \|\nabla\left(\frac{1}{w_\delta(\bx)}\int_{\V_\delta} R_\delta(\bx,\by)\bfu(\by)\mathd \by\right)\|_{L^2(\Omega_\delta)}\le C\sqrt{\delta}\|\bff\|_{H^1(\Omega)}
\end{align*}
And it follows from Lemma \ref{cor:coercivity-inner} and \eqref{eq:u-error},
\begin{align*}
  \|\nabla\left(\frac{1}{w_\delta(\bx)}\int_{\Omega_\delta} R_\delta(\bx,\by)\be_\delta(\by)\mathd \by\right)\|_{L^2(\Omega_\delta)}\le C\sqrt{\delta}\|\bff\|_{H^1(\Omega)}.
\end{align*}
Hence, the proof is completed.
\end{proof}



\section{Discussion and Conclusion}

In this paper, we propose a nonlocal model for linear steady Stokes equation with no-slip boundary condition. The main idea is to use volume constraint to enforce 
the no-slip boundary condition and add a relaxation term in the divergence free condition to maintain the well-posedness of the nonlocal system.  As the nonlocal
horizon paramter $\delta$ approaches 0, the solution of the nonlocal system converges to 
the solution of the original Stoke equation, assuming that the solution to the latter is sufficiently smooth.

In terms of future work, one may examine the convergence with minimal regularity assumptions on the local systems. It is also interesting to consider the numerical discretizations.
From the nonlocal system, we can derive a numerical scheme for the original Stokes system on point cloud. 
Assume we are given
a set of sample points $P$ sampling the domain $\M$ and a subset $S\subset P$ sampling the boundary of $\M$.
In addition, assume we are given
one vector $\bV = (V_1, \cdots, V_n)^t$ where $V_i$ is an volume weight of $\bx_i$ in $\M$, so that for any $C^1$
function $f$ on $\M$, $\int_\M f(\bx) \mathd\bx$ can be approximated by
$\sum_{\bx_i\in\Omega} f(\bx_i) V_i$.

Then, the nonlocal Stokes system \eqref{eq:stokes-nonlocal} can be discretized as following.
\begin{align*}
  -\frac{1}{\delta^2}\sum_{\bx_j\in \Omega} R_\delta(\bx_i,\bx_j)(\bfu_i-\bfu_j)V_j+\frac{1}{2\delta^2}\sum_{\bx_j\in \Omega} R_\delta(\bx_i,\bx_j)(\bx_i-\bx_j)p_jV_j
&\\
&\hspace{-3.3cm} = \sum_{\bx_j\in \Omega} \bar{R}_\delta(\bx_i,\bx_j)\bff_jV_j,\quad \bx_i\in \Omega_\delta\\
\frac{1}{2\delta^2}\sum_{\bx_j\in \Omega} R_\delta(\bx_i,\bx_j)(\bx_i-\bx_j)\bfu_jV_j-\sum_{\bx_j\in \Omega} \bar{R}_\delta(\bx_i,\bx_j)(p_i-p_j)V_j=& 0,\quad \bx_i\in \Omega,\\
\bfu_i=& 0,\quad \bx_i\in \V_\delta.
\end{align*}
This scheme is very simple and easy to implement. However, the accuracy is relatively low. We can show that the error of above scheme is $O\left(\frac{h}{\delta^2}+\delta\right)$,
where $h$ is the average distance among the sample points in $P$. The first term $h/\delta^2$ comes from the error of the numerical integral and the second term $\delta$ is from error between nonlocal system and the original Stoke equation. 
Further improvement and studies of asymptotically compatible scheme \cite{TD14} are interesting questions to be explored further.

\appendix

\section{Formal derivation of the nonlocal Stokes model}
\label{sec:appendix1}

Based on Assumptions \ref{assumption} on the nonlocal kernels, we give some formal derivation of the nonlocal Stokes model from its local counterpart.

First, {for $\bx\in \Omega_\delta$,  we multiply $\bar{R}_\delta(\bx,\by)$ on both sides of the first equation of the Stokes system \eqref{eq:stokes} evaluated at $\by\in \Omega$ and taking integral  with respect to $\by$} over $\Omega$,
\begin{align*}
   \int_\Omega \bar{R}_\delta(\bx,\by) \Delta \bfu(\by)\mathd \by -\int_\Omega \bar{R}_\delta(\bx,\by) \nabla p(\by)\mathd \by = \int_\Omega \bar{R}_\delta(\bx,\by)\bff(\by)\mathd \by,\quad \bx\in \Omega_\delta
\end{align*}
For the left hand side, we apply integration by parts and using the  { property $\bar{R}_\delta(\bx,\by)=0$ for  $\by\in \partial \Omega$ and the} relation between $\bar{R}$ and $R$, 

\begin{equation}
\label{eq:deriv2}
\begin{aligned}
&    \frac{1}{2\delta^2}\int_\Omega R_\delta(\bx,\by) ({\by-\bx})\cdot\nabla \bfu(\by)\mathd \by - \frac{1}{2\delta^2}\int_\Omega R_\delta(\bx,\by) ({\by-\bx}) p(\by)\mathd \by\\
&\quad = \int_\Omega \bar{R}_\delta(\bx,\by)\bff(\by)\mathd \by,\quad \bx\in \Omega_\delta
\end{aligned}
\end{equation}

For the first term of the left hand side, the derivation in \cite{Shi-vc} proceeds with an approximation by Taylor expansion
for $\bx\in \Omega_\delta$,
\begin{align*}
&\int_\Omega \bar{R}_\delta(\bx,\by) \Delta \bfu(\by)\mathd \by\\
    =&-\frac{1}{2\delta^2}\int_\Omega R_\delta(\bx,\by) (\bx-\by)\cdot\nabla \bfu(\by)\mathd \by\\
    =&-\frac{1}{2\delta^2}\int_\Omega R_\delta(\bx,\by) \left(\bfu(\bx)-\bfu(\by)-\frac{1}{2}\sum_{i,j=1}^n(x_i-y_i)(x_j-y_j)\frac{\partial^2 \bfu(\by)}{\partial y_i\partial y_j}\right)\mathd \by+O(\delta)\\
    =&-\frac{1}{2\delta^2}\int_\Omega R_\delta(\bx,\by) \left(\bfu(\bx)-\bfu(\by)\right)\mathd \by+\frac{1}{2}\sum_{i,j=1}^n\int_\Omega \frac{\partial}{\partial y_i}\bar{R}_\delta(\bx,\by) (x_j-y_j)\frac{\partial^2 \bfu(\by)}{\partial y_i\partial y_j}\mathd \by+O(\delta)\\
    =&-\frac{1}{2\delta^2}\int_\Omega R_\delta(\bx,\by) \left(\bfu(\bx)-\bfu(\by)\right)\mathd \by+\frac{1}{2}\int_\Omega \bar{R}_\delta(\bx,\by) \Delta \bfu(\by)\mathd \by+O(\delta)\\
    =&-\frac{1}{\delta^2}\int_\Omega R_\delta(\bx,\by) \left(\bfu(\bx)-\bfu(\by)\right)\mathd \by+O(\delta)\,.
\end{align*}
By dropping $O(\delta)$ term, we obtain
{
\begin{align*}
  &  -\frac{1}{\delta^2}\int_\Omega R_\delta(\bx,\by) (\bfu(\bx)-\bfu(\by))\mathd \by +\frac{1}{2\delta^2}\int_\Omega R_\delta(\bx,\by) (\bx-\by) p(\by)\mathd \by\\
  &\qquad = \int_\Omega \bar{R}_\delta(\bx,\by)\bff(\by)\mathd \by,\quad \bx\in \Omega_\delta\,.
\end{align*}
}
From the derivation, it would appear that the error in the approximation of the left hand side is formally of order  $O(\delta)$.

The derivation of the second equation of the nonlocal model \eqref{eq:stokes-nonlocal-intro} is much easier. We also multiply $\bar{R}_\delta(\bx,\by)$ in the divergence free equation and carry out integration by parts over $\Omega$
\begin{align*}
    \int_\Omega R_\delta(\bx,\by)(\bx-\by)\cdot \bfu(\by)\mathd \by=0\,.
\end{align*}
Then a stablization term
{ that mimics a nonlocal analog of the multiple of $\delta^2 \Delta p$
is added to the above to obtain the second equation of the nonlocal model \eqref{eq:stokes-nonlocal-intro}:
We remark that the stablization term is $O(\delta^2)$ so that its presence does not affect the order of the overall approximation.}

\section{Some basic estimates on the local Stokes system}
\label{sec:appendix3}
\begin{lemma}
  \label{lem:u-boundary}
Let $\bfu(\bx)$ be the solution of the Stokes system \eqref{eq:stokes} and $\bff\in H^1(\M)$, then there are generic constants $C>0$ and $T_0>0$, depending only on $\M$ and $\p\M$, such that for any $\delta<T_0$, 
\begin{eqnarray}
  \int_{\V_\delta}|\bfu(\by)|^2\mathd\by\le C\delta^{3}\|\bff\|_{L^2(\M)}^2. \nonumber
\end{eqnarray}
\end{lemma}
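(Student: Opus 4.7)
\textbf{Proof proposal for Lemma \ref{lem:u-boundary}.}

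The plan is to exploit the no-slip boundary condition $\bfu|_{\partial\Omega}=0$ together with the $H^2$ regularity of the Stokes solution. Since $\partial\Omega$ is $C^2$ and the claim is only for $\delta$ smaller than some $T_0$, I can use a tubular neighborhood parameterization: for $\delta\le T_0$ small, every $\by\in\V_\delta$ can be written uniquely as $\by=\bz+s\,\bn(\bz)$ with $\bz\in\partial\Omega$ the nearest boundary point, $\bn(\bz)$ the inward unit normal, and $s\in[0,c\delta]$ for some geometric constant $c$. The Jacobian of this change of variables is uniformly bounded above and below.

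The first step is a pointwise bound obtained from $\bfu(\bz)=0$ and the fundamental theorem of calculus:
\begin{equation*}
\bfu(\by)=\bfu(\bz+s\,\bn(\bz))-\bfu(\bz)=\int_0^s \partial_n\bfu(\bz+t\,\bn(\bz))\,\mathd t,
\end{equation*}
so $|\bfu(\by)|^2\le s\int_0^s |\nabla\bfu(\bz+t\,\bn(\bz))|^2\mathd t$ by Cauchy--Schwarz. Integrating over $\V_\delta$ via the tubular coordinates and switching the order of $s$ and $t$ integrations yields
\begin{equation*}
\int_{\V_\delta}|\bfu(\by)|^2\,\mathd\by \;\le\; C\delta^2\int_{\V_{c\delta}}|\nabla\bfu(\by)|^2\,\mathd\by.
\end{equation*}

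The second step is a strip estimate for $\nabla\bfu$ of the form $\int_{\V_{c\delta}}|\nabla\bfu|^2\le C\delta\,\|\nabla\bfu\|_{H^1(\Omega)}^2$. To see this, apply the fundamental theorem of calculus to $|\nabla\bfu|^2$ along inward normals starting from $\partial\Omega$: for any $H^1$ function $g$ and $\by=\bz+s\bn(\bz)$,
\begin{equation*}
|g(\by)|^2\le 2|g(\bz)|^2+2s\int_0^s|\partial_n g(\bz+t\bn(\bz))|^2\mathd t.
\end{equation*}
Integrating in the tubular coordinates and using the trace inequality $\|g\|_{L^2(\partial\Omega)}\le C\|g\|_{H^1(\Omega)}$ with $g=\nabla\bfu$ then gives $\int_{\V_{c\delta}}|\nabla\bfu|^2\le C\delta\,\|\bfu\|_{H^2(\Omega)}^2$.

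The third step invokes classical $H^2$ regularity for the steady Stokes system on a $C^2$ domain with no-slip boundary condition: $\|\bfu\|_{H^2(\Omega)}+\|p\|_{H^1(\Omega)}\le C\|\bff\|_{L^2(\Omega)}$, which is available since $\bff\in H^1(\Omega)\hookrightarrow L^2(\Omega)$. Concatenating the three bounds yields exactly $\int_{\V_\delta}|\bfu|^2\le C\delta^3\|\bff\|_{L^2(\Omega)}^2$. The only delicate point is the tubular neighborhood change of variables and the trace-based strip estimate for $\nabla\bfu$; both are standard once $\partial\Omega$ is $C^2$, and the use of $\bfu(\bz)=0$ on $\partial\Omega$ is precisely what delivers the extra factor $\delta^2$ beyond the $\delta$ coming from the width of the strip.
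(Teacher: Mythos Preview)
Your proposal is correct and follows essentially the same strategy as the paper: tubular-neighborhood parametrization of $\V_\delta$, the fundamental theorem of calculus from the boundary (where $\bfu=0$) to gain $\delta^2$, a strip/trace estimate on $\nabla\bfu$ to gain the remaining $\delta$, and the standard $H^2$ regularity $\|\bfu\|_{H^2(\Omega)}\le C\|\bff\|_{L^2(\Omega)}$ for Stokes. The only cosmetic difference is that the paper bounds $\int_{\V_\delta}|\nabla\bfu|^2$ by applying the trace theorem on a family of surfaces $\Gamma_{s,\tau}$ parallel to $\partial\Omega$, whereas you use the fundamental theorem of calculus for $\nabla\bfu$ together with the trace on $\partial\Omega$ itself; both routes are standard and yield the same $C\delta\|\bfu\|_{H^2(\Omega)}^2$.
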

\begin{proof} Since $\p \M$ is compact and $C^{\infty}$ smooth. Consequently, it is well known that $\p \M$ has positive reaches \cite{dey2011approximating}.
, which means that
there exists $T_0>0$ only depends on $\p\M$, if $t<T_0$,
$\V_\delta$ can be parametrized as $(\mathbf{z}(\by),\tau)\in \p\M\times [0,1]$, where $\by=\mathbf{z}(\by)+\tau(\mathbf{z}'(\by)-\mathbf{z}(\by))$
and $\left|\det\left(\frac{\mathd \by}{\mathd (\mathbf{z}(\by),\tau)}\right)\right|\le C\delta$ and $C>0$ is a constant only depends on $\M$ and $\p\M$.
Here $\mathbf{z}'(\by)$ is the intersection point between $\p\M'$ and the line determined by $\mathbf{z}(\by)$ and $\by$. The parametrization is illustrated in
Fig.\ref{fig:boundary}.
  \begin{figure}
    \centering
    \includegraphics[width=0.6\textwidth]{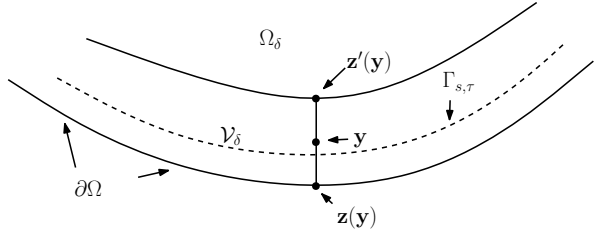}
    \caption{Parametrization of $\mathcal{V}_\delta$.}
    \label{fig:boundary}
  \end{figure}

First, we have
  \begin{eqnarray}
&&  \int_{\V_\delta}|\bfu(\by)|^2\mathd\by=\int_{\V_\delta}|\bfu(\by)-\bfu(\mathbf{z}(\by))|^2\mathd\by\nonumber\\
&=&\int_{\V_\delta}\left|\int_{0}^1\frac{\mathd }{\mathd s}\bfu(\by+s(\mathbf{z}(\by)-\by))\mathd s\right|^2\mathd\by\nonumber\\
&=&\int_{\V_\delta}\left|\int_{0}^1(\mathbf{z}(\by)-\by)\cdot \nabla \bfu(\by+s(\mathbf{z}(\by)-\by))\mathd s\right|^2\mathd\by\nonumber\\
&\le&C\delta^2\int_{\V_\delta}\int_{0}^1\left| \nabla \bfu(\by+s(\mathbf{z}(\by)-\by))\right|^2\mathd s\mathd\by\nonumber\\
&\le & C\delta^2\sup_{0\le s\le 1}\int_{\V_\delta}\left| \nabla \bfu(\by+s(\mathbf{z}(\by)-\by))\right|^2\mathd\by.\nonumber
\end{eqnarray}
Here, we use the fact that $\|\mathbf{z}(\by)-\by\|_2\le 2\delta$ to get the second last inequality.

Then, the proof can be completed by following estimation.
\begin{eqnarray}
&&  \int_{\V_\delta}\left| \nabla \bfu(\by+s(\mathbf{z}(\by)-\by))\right|^2\mathd\by\nonumber\\
&\le& C\delta\int_0^1\int_{\p\M}\left| \nabla \bfu(\mathbf{z}(\by)+(1-s)\tau(\mathbf{z}'(\by)-\mathbf{z}(\by)))\right|^2\mathd\mathbf{z}(\by)\mathd \tau\nonumber\\
&\le& C\delta\sup_{0\le \tau\le 1}\int_{\p\M}\left| \nabla \bfu(\mathbf{z}+(1-s)\tau(\mathbf{z}'-\mathbf{z}))\right|^2\mathd\mathbf{z}\nonumber\\
&\le& C\delta\sup_{0\le \tau\le 1}\int_{\Gamma_{s,\tau}}\left| \nabla \bfu(\tilde{\mathbf{z}})\right|^2\mathd\tilde{\mathbf{z}}\nonumber\\
&\le &C\delta \|\bfu\|_{H^2(\M)}^2\le C\delta \|\bff\|_{L^2(\M)}^2,\nonumber
\end{eqnarray}
where $\Gamma_{s,\tau}$ is a $k-1$ dimensional manifold given by $\Gamma_{s,\tau}=\left\{\mathbf{z}+(1-s)\tau(\mathbf{z}'-\mathbf{z}):\mathbf{z}\in \p\M\right\}$.
We use the trace theorem to get the second last inequality and the last inequality is due to that $\bfu$ is the solution of the Stokes system \eqref{eq:stokes}
\end{proof}

\section{Divergence estimation \eqref{eq:bound-v} \eqref{eq:bound-psi}}
\label{sec:appendix4}
\begin{theorem}(Theorem III.3.1 in \cite{NS-book})
\label{thm:div}
  Let $\Omega$ be a bounded domain of $\mathbb{R}^n,\; n\ge 2$, such that
  $$\Omega=\bigcup_{k=1}^N \Omega_k,\quad N\ge 1,$$
  where each $\Omega_k$ is star-shaped with respect to some open ball $B_k$ with $\bar{B}_k\subset \Omega_k$. Then, given $f\in L^q(\Omega),\; 1<q<\infty$, satisfying $\int_\Omega f(\bx)\mathd \bx=0$, there exists at least one solution $\bm{v}\in W^{1,q}_0(\Omega)$ to 
  $$\nabla \cdot \bm{v}(\bx)=f(\bx),\quad \bx\in\Omega,$$
  and 
  $$\|\bm{v}\|_{1,q}\le c\|f\|_q.$$
  Furthermore, the constant $c$ admits the following estimate:
  $$c\le c_0C\left(\frac{d(\Omega)}{R_0}\right)^n\left(1+\frac{d(\Omega)}{R_0}\right),$$
  where $R_0$ is the smallest radius of the balls $B_k$, $d(\Omega)$ is the diameter of $\Omega$, $c_0=c_0(n,q)$
  and $C$ is an upper bound for the constants $C_k$ given as following,
  \begin{align*}
      C_1&=1+\frac{|\Omega_1|^{1-1/q}}{|F_1|^{1-1/q}}\\
      C_k&=\left(1+\frac{|\Omega_k|^{1-1/q}}{|F_k|^{1-1/q}}\right)\prod_{i=1}^{k-1}(1+|F_i|^{-(1-1/q)}|D_i-\Omega_i|^{1-1/q}),\quad k\ge 2,
  \end{align*}
  $F_i=\Omega_i\cap D_i$ and $D_i=\bigcup_{s=i+1}^N \Omega_s$.\footnote{Since $\Omega$ is connected, we can always label sets $F_i$ in such a way that $|F_i|\neq 0,\; i=1,\cdots,N$.}
\end{theorem}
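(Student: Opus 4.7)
The plan is to prove this classical result (Galdi's Theorem III.3.1) in two stages: first an explicit Bogovskii construction on a single star-shaped domain, and then a reduction of the general union case to this one via an inductive decomposition of the datum $f$.

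Stage 1 (single star-shaped domain). Suppose $\Omega$ is star-shaped with respect to an open ball $B$ of radius $R_0$. I would fix a cut-off $\omega\in C_0^\infty(B)$ with $\int\omega = 1$, $\|\omega\|_\infty$ scaling like $R_0^{-n}$ and $\|\nabla\omega\|_\infty$ like $R_0^{-(n+1)}$, and define the Bogovskii operator
$$
(Tf)(x) \;=\; \int_\Omega (x-y)\, f(y) \int_1^\infty \omega\!\left(y+s(x-y)\right) s^{n-1}\, \mathd s\, \mathd y.
$$
A direct distributional computation using $\int\omega = 1$ shows that the vector kernel $N(x,y) = (x-y)\int_1^\infty \omega(y+s(x-y)) s^{n-1}\, \mathd s$ satisfies $\nabla_x\cdot N(x,y) = \delta(x-y) - \omega(x)$, hence $\nabla\cdot(Tf) = f - \omega \int_\Omega f = f$ whenever $\int_\Omega f = 0$. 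The star-shaped geometry forces $Tf$ to vanish in a neighborhood of $\partial\Omega$, placing $Tf$ in $W^{1,q}_0(\Omega)$ once the $L^q$ integrability is in hand. Differentiating $Tf$ under the integral and using the substitution $z = s(x-y)$ splits $\nabla Tf$ into a principal-value Calderon-Zygmund operator with kernel homogeneous of degree $-n$ plus a lower-order integrable remainder; the cancellation needed for the singular part again follows from $\nabla_x\cdot N = \delta - \omega$. Classical Calderon-Zygmund theory on $\mathbb{R}^n$ combined with the Poincaré inequality (admissible since $Tf\in W^{1,q}_0$) then yields $\|Tf\|_{1,q}\le c_0(n,q)(d(\Omega)/R_0)^n(1+d(\Omega)/R_0)\|f\|_q$ after carefully tracking how $\omega$ scales in $R_0$.

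Stage 2 (induction on $N$). For the general union $\Omega = \bigcup_{k=1}^N \Omega_k$, I would decompose $f = \sum_k f_k$ with $\mathrm{supp}\,f_k\subset\Omega_k$ and $\int_{\Omega_k}f_k = 0$, then set $v = \sum_k \tilde{T}_k f_k$, where $\tilde{T}_k f_k\in W^{1,q}_0(\Omega_k)$ is the Bogovskii solution on $\Omega_k$ extended by zero to $\Omega$. The decomposition is built inductively along the layers $D_i = \bigcup_{s>i}\Omega_s$: choose $f_1 = f\chi_{\Omega_1\setminus D_1} + f\chi_{F_1} - g_1$, where $g_1$ is supported in $F_1 = \Omega_1\cap D_1$ with mean value chosen to force $\int_{\Omega_1}f_1 = 0$, so that the residual $f - f_1$ is supported in $D_1$ with zero total integral. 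Iterating this procedure on $(\Omega_2,\ldots,\Omega_N)$ and applying Stage 1 to each $\tilde{T}_k f_k$ produces the claimed estimate, with the factor $(1+|\Omega_k|^{1-1/q}/|F_k|^{1-1/q})$ arising from the $L^q$ cost of the local mass correction $g_k$, and the product $\prod_{i<k}(1+|F_i|^{-(1-1/q)}|D_i\setminus\Omega_i|^{1-1/q})$ accumulating from successive mass redistribution across layers.

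Main obstacle. The technically most delicate step is the singular integral estimate in Stage 1: verifying that $\nabla_x N(x,y)$ defines an $L^q$-bounded operator with a constant scaling precisely as $(d(\Omega)/R_0)^n(1+d(\Omega)/R_0)$ requires isolating the principal-value piece from the smooth remainder and propagating the quantitative dependence of $\omega$ on $R_0$ through the Calderon-Zygmund constants. A secondary but still nontrivial challenge, combinatorial rather than analytic, is the bookkeeping of the constants in Stage 2 needed to match the exact product form of $C_k$ in the statement.
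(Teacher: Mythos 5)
This theorem is not proved in the paper at all---it is quoted verbatim from the cited reference (Theorem III.3.1 in Galdi's book), and your two-stage outline (Bogovskii's explicit integral operator with a Calder\'on--Zygmund estimate on a single star-shaped domain, followed by the inductive mass-redistribution decomposition of $f$ across the pieces $\Omega_k$ through the overlaps $F_i$, which is where the constants $C_k$ come from) is precisely the argument given there, so your approach coincides with the paper's implicit proof-by-citation. The sketch is correct as an outline; the only imprecision worth flagging is that $Tf$ vanishes near $\partial\Omega$ only when $f$ is compactly supported in $\Omega$, the general $f\in L^q$ case being recovered by density together with the a priori bound, exactly as in the cited proof.
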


Based on above theorem, to get the constant independent on $\delta$ in \eqref{eq:bound-psi}, we need to find decomposition for $\Omega_\eta,\; 0\le \eta\le \delta_0$ such that corresponding $R_0$ and $|F_i|$ both have uniform lower bound independent on $\eta$ with some $\delta_0>0$. Next, we will give an explicit way to construct the decomposition of $\Omega_\eta$.

 \begin{figure}
    \centering
    \includegraphics[width=0.8\textwidth]{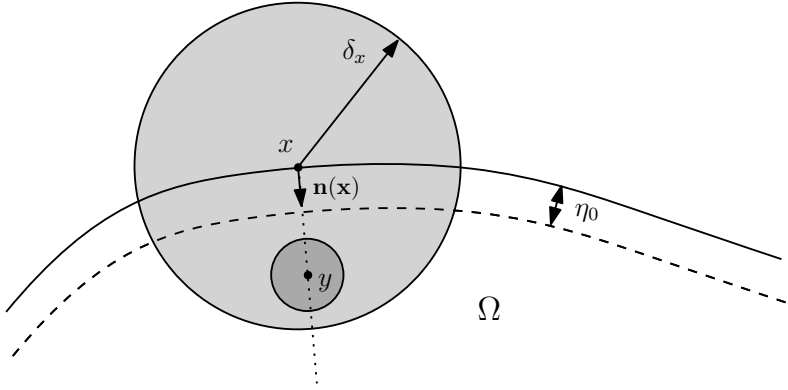}
    \caption{Cover of $\partial \Omega$.}
    \label{fig:div}
  \end{figure}


Under the assumption that the boundary $\partial \Omega$ is $C^2$ smooth, as shown in Fig. \ref{fig:div}, for any point $\bx\in \partial \Omega$, there exists $\delta_{\bx}>0$ such that $$U_{\bx}=\{\bz\in \Omega: |\bz-\bx|<\delta_{\bx}\}$$
is star-shaped with respect to open ball $B(\by,\delta_{\bx}/4)$ with $\by=\bx-\frac{2}{3}\delta_{\bx} \bn(\bx)$, $\bn(\bx)$ is the outer normal of $\partial \Omega$ at $\bx$.

$\displaystyle\bigcup_{\bx\in \partial \Omega}U_{\bx}$ is an open cover of $\partial \Omega$. Since $\partial \Omega$ is compact, there exist $\bx_k\in \partial \Omega,\; k=1,\cdots,N$ such that 
$$\partial \Omega\subset \bigcup_{k=1}^N U_{\bx_k}.$$ Compactness of $\partial \Omega$ also implies that there exists $\displaystyle \eta_0\in \left(0,\frac{1}{2}\min_{1\le k\le N} \delta_{\bx_k}\right)$ such that $$\mathcal{V}_{\eta_0}\subset \bigcup_{k=1}^N U_{\bx_k}.$$ Recall that $\mathcal{V}_{\eta_0}=\{\bx\in \Omega: \mbox{dist}(\bx,\partial \Omega)\le \eta_0\}$.

For any $0\le \eta\le \eta_0/2$, 
$$U_{\bx_k}^\eta=\{\bz\in \Omega_\eta: |\bz-\bx_k|<\delta_{\bx_k}\},\quad k=1,\cdots,N$$ 
are also star-shaped with respect to $B(\by_k,\delta_{\bx_k}/4)$ with $\by_k=\bx_k-\frac{2}{3}\delta_{\bx_k} \bn(\bx_k)$, $\bn(\bx_k)$ is the outer normal of $\partial \Omega$ at $\bx_k$.

On the other hand, compactness of $\bar{\Omega}_{\eta_0}$ gives  $\bz_1,\cdots,\bz_M\in \bar{\Omega}_{\eta_0}$ such that $$\bar{\Omega}_{\eta_0}\subset \bigcup_{k=1}^M B(\bz_k,\eta_0/2).$$

For any $0\le \eta\le \eta_0/2$, 
$$\Omega_\eta=\left(\bigcup_{k=1}^N U_{\bx_k}^\eta\right)\bigcup \left(\bigcup_{k=1}^M B(\bz_k,\eta_0/2)\right) $$
$U_{\bx_k}^\eta$ is star-shaped with respect to $B(\by_k,\eta_0/2)$ and $B(\bz_k,\eta_0/2)$ is star-shaped with respect to itself. It is easy to check based on above decomposition, Theorem \ref{thm:div} implies \eqref{eq:bound-v} and \eqref{eq:bound-psi}.
 
\bibliographystyle{abbrv}
\bibliography{reference}

\end{document}